\colorlet{darkblue}{blue!50!black}
\colorlet{darkblue}{blue!50!black}
\renewcommand{\Im}{\mathop{\rm Im}\nolimits}
\newcommand{\p}{\partial}
\newcommand{\e}{\varepsilon}
\newcommand{\Q}{{\mathbb Q}}
\newcommand{\R}{{\mathbb R}}
\newcommand{\Z}{{\mathbb Z}}
\newcommand{\IP}{{\mathbb P}}
\newcommand{\pP}{{\mathbb P}}
\newcommand{\I}{{\mathbb I}}
\newcommand{\E}{{\mathbb E}}
\newcommand{\T}{{\mathbb T}}
\newcommand{\ty}{\infty}
\newcommand{\XXX}{{\boldsymbol{X}}}
\newcommand{\Ker}{\mathop{\rm Ker}\nolimits}
\newcommand{\Ind}{\mathop{\rm Ind}\nolimits}
\newcommand{\codim}{\mathop{\rm codim}\nolimits}
\newcommand{\OOmega}{{\boldsymbol\Omega}}
\newcommand{\oomega}{{\boldsymbol\omega}}
\newcommand{\BB}{{\cal B}}
\newcommand{\DD}{{\cal D}}
\newcommand{\FF}{{\cal F}}
\newcommand{\GG}{{\cal G}}
\newcommand{\HH}{{\cal H}}
\newcommand{\II}{{\cal I}}
\newcommand{\KK}{{\cal K}}
\newcommand{\PP}{{\cal P}}
\newcommand{\VV}{{\cal V}}
\newcommand{\lag}{\langle}
\newcommand{\rag}{\rangle}
\newcommand{\dd}{{\textup d}}
\newcommand{\PPPP}{{\mathfrak P}}
\newcommand{\lspan}{\mathop{\rm span}\nolimits}
\newcommand{\supp}{\mathop{\rm supp}\nolimits}
\theoremstyle{plain}
\newtheorem*{mt}{Main Theorem}
\newtheorem*{lemma*}{Lemma}
\newtheorem{theorem}{Theorem}[section]
\newtheorem{lemma}[theorem]{Lemma}
\newtheorem{proposition}[theorem]{Proposition}
\newtheorem{corollary}[theorem]{Corollary}
\theoremstyle{definition}
\newtheorem{definition}[theorem]{Definition}
\theoremstyle{remark}
\newtheorem{example}[theorem]{Example}
\numberwithin{equation}{section}
\begin{document}
\author{Sergei Kuksin\footnote{Institut de Math\'emathiques de Jussieu--Paris Rive Gauche, CNRS, Universit\'e Paris Diderot, UMR 7586, Sorbonne Paris Cit\'e, F-75013, Paris, France \& School of Mathematics, Shandong University, Jinan, PRC \& Saint Petersburg State University, Universitetskaya nab., St. Petersburg, Russia; e-mail: \href{mailto:Sergei.Kuksin@imj-prg.fr}{Sergei.Kuksin@imj-prg.fr}} \and Vahagn~Nersesyan\footnote{Laboratoire de Math\'ematiques, UMR CNRS 8100, UVSQ, Universit\'e Paris-Saclay, 45, av. des Etats-Unis, F-78035 Versailles, France \& Centre de Recherches Math\'ematiques, CNRS UMI 3457, Universit\'e de Montr\'eal, Montr\'eal,  QC, H3C 3J7, Canada;  e-mail: \href{mailto:Vahagn.Nersesyan@math.uvsq.fr}{Vahagn.Nersesyan@math.uvsq.fr}} \and
Armen Shirikyan\footnote{Department of Mathematics, University of Cergy-Pontoise, CNRS UMR 8088, 2 avenue Adolphe Chauvin, 95302 Cergy--Pontoise, France \& Department of Mathematics and Statistics,
McGill University, 805 Sherbrooke Street West, Montreal, QC, H3A 2K6, Canada;  
e-mail: \href{mailto:Armen.Shirikyan@u-cergy.fr}{Armen.Shirikyan@u-cergy.fr}}}
\title{Mixing via controllability for randomly forced nonlinear dissipative PDEs}
\date{}
\maketitle

\begin{abstract}
In the paper~\cite{KNS-2018}, we studied the problem of mixing for a class of PDEs with very degenerate noise and established the uniqueness of stationary measure and its exponential stability in the dual-Lipschitz metric. One of the hypotheses imposed on the problem in question required that the unperturbed equation should have exactly one globally stable equilibrium point. In this paper, we relax that condition, assuming only global controllability to a given point. It is proved that the uniqueness of a stationary measure and convergence to it are still valid, whereas the rate of convergence is not necessarily exponential. The result is applicable to randomly forced parabolic-type PDEs, provided that the deterministic part of the external force is in general position, ensuring a regular structure for the attractor of the unperturbed problem. 

\smallskip
\noindent
{\bf AMS subject classifications:} 35K58, 35R60, 37A25, 37L55, 60G50, 60H15, 76M35, 93B18, 93C20

\smallskip
\noindent
{\bf Keywords:} Markov process, stationary measure, mixing, nonlinear parabolic PDEs, Lyapunov function, Haar series, random walk
\end{abstract}

\newpage
\tableofcontents

\setcounter{section}{-1}

\section{Introduction}
\label{s0} 

In the last twenty years, there was a substantial progress in the question of description of the long-time behaviour of solutions for PDEs with random forcing. The problem is particularly well understood when all the determining modes are directly affected by the stochastic perturbation. In this situation, for a large class of PDEs the resulting random flow possesses a unique stationary distribution, which attracts the laws of all the solutions with an exponential rate. We refer the reader to~\cite{FM-1995,KS-cmp2000,EMS-2001,BKL-2002} for the first results in this direction and to the review papers~\cite{ES-2000,bricmont-2002,debussche-2013} and the book~\cite{KS-book} for a detailed discussion of the literature. The question of uniqueness of stationary distribution becomes much more delicate when the random forcing is very degenerate and does not act directly on all the determining modes of the evolution. In this case, the propagation of the randomness under the unperturbed dynamics plays a crucial role and may still ensure the uniqueness and stability of a stationary distribution. There are essentially two mechanisms of propagation---transport and diffusion---and they allowed one to get two groups of results. The first one deals with  random forces that are localised in the Fourier space. In this situation, it was proved by Hairer and Mattingly~\cite{HM-2006,HM-2011} that the Navier--Stokes flow is exponentially mixing in the dual-Lipschitz metric, provided that the random perturbation is white in time. F\"oldes, Glatt-Holtz, Richards, and Thomann~\cite{FGRT-2015} established a similar result for the Boussinesq system, assuming that a degenerate random force acts only on the equation for the temperature. The recent paper~\cite{KNS-2018} deals with various parabolic-type PDEs perturbed by {\it bounded\/} observable forces, which allowed for treatment of nonlinearities of arbitrary degree. The second group of results concerns random forces localised in the physical space. They were obtained in~\cite{shirikyan-asens2015,shirikyan-2018} for the Navier--Stokes equations in an arbitrary domain with a random perturbation distributed either in a subdomain or on the boundary. 

The goal of the present paper is to relax a hypothesis in~\cite{KNS-2018} that required the existence of an equilibrium point which is globally asymptotically stable under the unperturbed dynamics. To illustrate our general result, let us consider the following example of a randomly forced parabolic PDE to which it is applicable:
\begin{equation} \label{rf-pde}
	\p_t u-\nu\Delta u+f(u)=h(x)+\eta(t,x), \quad x\in\T^d,  \quad d\le 4. 
\end{equation}
Here $\nu>0$ is a parameter, $f:\R\to\R$ is a polynomial   satisfying some natural growth and dissipativity hypotheses (see~\eqref{E:4.1} and~\eqref{E:4.2}), $h: \T^d\to \R$ is a smooth deterministic function, and~$\eta$ is a finite-dimensional {\it Haar coloured noise}. More precisely, we assume that~$\eta$ is a random process that takes  values in a sufficiently large\footnote{More precisely, we require~$\HH$ to be {\it saturating\/} in the sense of Definition~\ref{D:4.1}.} finite-dimensional  subspace~$\HH$ of~$L^2(\T^d)$ and has  the form 
\begin{equation} \label{eta-intro}
	\eta(t,x)=  \sum_{i\in\II}b_i \eta^i(t)\varphi_i(x),
\end{equation}
where $\{\varphi_i\}_{i\in\II}$ is an orthonormal basis in~$\HH$, $\{b_i\}$ are non-zero numbers,   and~$\{\eta^i\}$ are   independent copies of a random process   defined by
\begin{equation} \label{0.3}
	\tilde\eta(t)=\sum_{k=0}^\infty \xi_k h_0(t-k)
	+\sum_{j=1}^\infty c_j\sum_{l=0}^\infty \xi_{jl}h_{jl}(t).
\end{equation}
In this sum,  $\{h_0,h_{jl}\}$ is the Haar basis\footnote{Note that the Haar basis used in this work differs from that of~\cite[Section~22]{lamperti1996} by normalisation.} in $L^2(0,1)$ (see~\cite[Section~5.2]{KNS-2018}), $\{c_j\}$ is a sequence given by
\begin{equation}\label{cj}
	c_j=C j^{-q}\quad \text{for some } C>0, \,\, q>1,
\end{equation} 
 and~$\{\xi_k,\xi_{jl}\}$  are independent identically distributed (i.i.d.) scalar random variables with Lipschitz-continuous density~$\rho$ such that $ \supp\rho\subset[-1,1]$   and $\rho(0)>0$.  Let us supplement Eq.~\eqref{rf-pde} with the initial condition
 \begin{equation}\label{rf-pde0}
 u(0,x)=u_0(x),
 \end{equation}
 where $u_0\in L^2(\T^d)$. Under the above hypotheses, the restrictions to integer times of solutions for problem~\eqref{rf-pde}, \eqref{rf-pde0} form a discrete-time Markov process, which is denoted by~$(u_k,\IP_u)$, and this Markov process is the main subject of our study.

We assume that the space~$\HH$ and the functions~$f$ and~$h$ are in general position in the sense that the following   two conditions are satisfied.
\begin{description}
\item[\hypertarget{S}{(S) Stationary states.}]
{\sl The nonlinear elliptic equation
\begin{equation} \label{stationary-eq}
	-\nu\Delta w+f(w)=h(x), \quad x\in\T^d
\end{equation}
has finitely many solutions $w_1,\dots,w_N\in H^2(\T^d)$\/}.
\end{description}  
Genericity of this condition is proved in~Section~\ref{S:5.3}, and    examples are provided by the criterion established in \cite[Section~5]{CI-1974}; e.g., in our context with $d=1$, one can take $f(u)=u^3-u$ and $h=0$. 

The existence of a Lyapunov function (see~\eqref{E:4.5}) implies that at least one of the stationary states, say~$w_N$, is 
  {\it locally asymptotically stable}.\,\footnote{To see this, it suffices to note that the Lyapunov function admits at least one local minimum, and any local minimum is a locally asymptotically stable stationary~state.}  This means that, for some  number $\delta>0$, the solutions  of the unperturbed equation  
\begin{equation} \label{unperturbed-eq}
	\p_t u-\nu\Delta u+f(u)=h(x)	
\end{equation}
that are issued from an initial condition~$u_0$ with  $\|u_0-w_N\|_{L^2(\T^d)} \le \delta$ converge uniformly to~$w_N$:
\begin{equation} \label{wNconvergence}
\lim_{t\to +\ty}\sup_{u_0\in B(w_N,\delta)}\|u(t)-w_N\|_{L^2(\T^d)}=0,	
\end{equation}
where $B(w,\delta)$ is the ball in~$L^2$ of radius~$\delta$ centred at~$w$. To formulate the second condition, let us   denote by $\KK$ the support of the law for  the restriction to the interval~$[0,1]$ of the process~\eqref{eta-intro} and by $S_n(u_0;\zeta_1,\dots,\zeta_n)$ the value of the solution for problem~\eqref{rf-pde}, \eqref{rf-pde0} in which the external force~$\eta$ coincides with~$\zeta_k$ on the time interval $[k-1,k]$. 
 
\begin{description}
\item[\hypertarget{C}{(C) Controllability to the neighbourhood of $w_N$.}]
{\sl  For any $1\le i\le N-1$,  there is an integer $n_i$ and functions~$\zeta_{i1},\dots,\zeta_{in_i}\in \KK$ such that \begin{equation}\label{controlc}
\|S_{n_i}(w_i;\zeta_{i1},\dots,\zeta_{in_i})-w_N\|_{L^2(\T^d)} < \delta.
\end{equation}} 
\end{description}
The validity of this condition       can be derived from Agrachev--Sarychev type approximate controllability results,\footnote{Theorem~\ref{T:5.5} of the Appendix establishes an approximate controllability property for Eq.~\eqref{rf-pde}. Namely, it shows that, for any $i\in[\![1,N-1]\!]$, there is an $\HH$-valued function~$\zeta_i$ such that the trajectory of Eq.~\eqref{rf-pde} issued from~$w_i$ is in the open $\delta$-neighbourhood of~$w_N$ at time $t=1$. Replacing the process~$\eta$ in~\eqref{eta-intro} with~$a\eta$ and choosing  $a\ge1$ sufficiently large, we can ensure that $\KK^a:=\supp \DD(a\eta)$ contains a function arbitrarily close to~$\zeta_i$, so that inequality~\eqref{controlc} holds with $n_i=1$.} provided that the support $\KK$ is  sufficiently large. The following theorem is a consequence of the main result of this paper on the uniqueness and mixing of a stationary measure for~$(u_k,\IP_u)$. Its exact formulation and further discussions are presented in Section~\ref{s4}. 

\begin{mt}
Under the above conditions,   the Markov process~$(u_k,\IP_u)$ has a unique stationary measure~$\mu$ on the space~$L^2(\T^d)$, and for any other solution~$u(t)$ of~\eqref{rf-pde}, we have
	$$
	\|\DD(u(k))-\mu\|_L^*\to0\quad\mbox{as $k\to\infty$},
	$$
	where $\|\cdot\|_L^*$ stands for the dual-Lipschitz metric over the space~$L^2(\T^d)$, and~$\DD(\cdot)$ denotes the law of a random variable. 
\end{mt}

The paper is organised as follows. In Section~\ref{s1}, we formulate and discuss our main theorem on the uniqueness of a stationary measure and mixing for a discrete-time Markov process. In Section~\ref{s2}, we derive some preliminary results needed in the proof of the main theorem, which is established in Section~\ref{s3}. Application to a class of nonlinear parabolic PDEs is presented in Section~\ref{s4}. Finally, the Appendix gathers some auxiliary results. 

\subsubsection*{Acknowledgement}
This research was supported by the {\it Agence Nationale de la Recherche\/} through  the grants ANR-10-BLAN~0102 and ANR-17-CE40-0006-02. SK  thanks the {\it Russian Science Foundation\/} for support through the grant  18-11-00032. VN and AS were supported by the CNRS PICS {\it Fluctuation theorems in stochastic systems\/}. The research of AS was carried out within the MME-DII Center of Excellence (ANR-11-LABX-0023-01) and supported by {\it Initiative d'excellence Paris-Seine\/}. The authors thank R.~Joly for the proof of the genericity of Hypothesis~\hyperlink{S}{\rm(S)} (see Proposition~\ref{P:5.3}). 

\subsubsection*{Notation}
For a Polish space~$X$ with a metric $d_X(u,v)$, we denote by $B_X(a,R)$ the closed ball of radius $R > 0$ centred at $a\in X$ and by $\dot B_X(a,R)$ the corresponding open ball. The Borel $\sigma$-algebra on~$X$ and the set of probability measures are denoted by~$\BB(X)$ and~$\PP(X)$, respectively. We shall use the following spaces, norms, and metrics. 

\smallskip
\noindent
$C_b(X)$ denotes the space of bounded continuous functions $f:X\to\R$ endowed with the norm $\|f\|_\infty=\sup_X|f|$, and $L_b(X)$ stands for the space of functions $f\in C_b(X)$ such that
$$
\|f\|_L:=\|f\|_\infty+\sup_{0<d_X(u,v)\le 1}\frac{|f(u)-f(v)|}{d_X(u,v)}<\infty.
$$
In the case of a compact space~$X$, we write~$C(X)$ and $L(X)$.

\smallskip
\noindent
The space~$\PP(X)$ is endowed with either the {\it total variation metric\/} or the  {\it dual-Lipschitz\/}  metric. They are defined by 
\begin{align} 
	\|\mu_1-\mu_2\|_{\mathrm{var}}&:=\sup_{\Gamma\in\BB(X)}|\mu_1(\Gamma)-\mu_2(\Gamma)|
=\frac12\sup_{\|f\|_\infty\le1}
\left|\lag f,\mu_1\rag-\lag f,\mu_2\rag\right|, \label{TVN}\\
\|\mu_1-\mu_2\|_L^*
&:=\sup_{\|f\|_L\le1}\left|\lag f,\mu_1\rag-\lag f,\mu_2\rag\right|, \label{dL}
\end{align}
where $\mu_1, \mu_2\in \PP(X)$, and $\lag f,\mu\rag = \int_X f(u) \,\mu(\dd u)$ for $f\in C_b(X)$ and $\mu\in \PP(X)$.

\smallskip
\noindent
$L^p(J,E)$ is the space Borel-measurable functions~$f$ on an interval $J\subset\R$ with range in a Banach space~$E$ such that 
$$
\|f\|_{L^p(J,E)}=\biggl(\int_J \|f(t)\|_E^p\dd t\biggr)^{1/p}<\infty;
$$
in the case $p=\infty$, this norm should be modified accordingly.

\smallskip
\noindent
$H^s(D)$ denote the Sobolev space of order $s\ge0$ with the usual norm~$\|\cdot\|_s$. 

\section{Main result}
\label{s1} 

Let us denote by~$H$ and~$E$ separable Hilbert spaces and by $S:H\times E\to H$ a continuous mapping. Given a sequence $\{\eta_k\}$ of i.i.d.\ random variables in~$E$, we consider the random dynamical system (RDS) 
\begin{equation} \label{1.1}
u_k=S(u_{k-1},\eta_k), \quad k\ge1. 
\end{equation}
In what follows, we always assume that the law~$\ell$ of the random variables~$\eta_k$ has a compact support~$\KK\subset E$ and that there is a compact set $X\subset H$ such that $S(X\times\KK)\subset X$. Our aim is to study the long-time behaviour of the restriction of the RDS~\eqref{1.1} to the invariant set~$X$. 

For a vector $u\in H$ and a sequence $\{\zeta_k\}\subset E$, we set $S_m(u;\zeta_1,\dots,\zeta_m):=u_m$, where $\{u_k\}$ is defined recursively by Eq.~\eqref{1.1} in which $u_0=u$ and $\eta_k=\zeta_k$. We assume that the hypotheses below hold for the RDS~\eqref{1.1} and some Hilbert space~$V$ compactly embedded into~$H$.

\smallskip
\begin{description}
\item[\hypertarget{H1}{(H$_1$)} Regularity.] 
{\sl The mapping~$S$ is twice continuously differentiable from $H\times E$ to~$V$, and its derivatives are bounded on bounded subsets. Moreover, for any fixed $u\in H$, the mapping $\eta\mapsto S(u,\eta)$ is analytic from~$E$ to~$H$, and all its derivatives $(D_\eta^jS)(u,\eta)$ are continuous functions of~$(u,\eta)$ that are bounded on bounded subsets of~$H\times E$.}

\item[\hypertarget{H2}{(H$_2$)} Approximate controllability to a point.] 
{\sl There is $\hat u\in X$ such that, for any $\e>0$, one can find an integer $m\ge1$ with the following property: for any $u\in X$ there are $\zeta_1,\dots,\zeta_m\in\KK$ such that}
\begin{equation} \label{1.2}
\|S_m(u;\zeta_1,\dots,\zeta_m)-\hat u\|<\e.
\end{equation}
\end{description}

Given $u\in X$, let us denote by~$\KK^u$ the set of those $\eta\in E$ for which the image of the derivative $(D_\eta S)(u,\eta)$ is dense in~$H$. It is easy to see that~$\KK^u$ is a Borel subset in~$E$; see Section~1.1 in~\cite{KNS-2018}. 

\begin{description}
\item[\hypertarget{H3}{(H$_3$)} Approximate controllability of the linearisation.] 
{\sl The set~$\KK^u$ has full $\ell$-measure  for any $u\in X$.}

\item[\hypertarget{H4}{(H$_4$)} Structure of the noise.]
{\sl There exists an orthonormal basis~$\{e_j\}$ in~$E$, independent random variables~$\xi_{jk}$, and real numbers~$b_j$ such that 
\begin{equation} \label{1.3}
\eta_k=\sum_{j=1}^\infty b_j\xi_{jk}e_j,\quad B:=\sum_{j=1}^\infty b_j^2<\infty. 
\end{equation}
Moreover, the laws of~$\xi_{jk}$ have Lipschitz-continuous densities~$\rho_j$ with respect to the Lebesgue measure on~$\R$.}
\end{description}

We refer the reader to Section~1.1 in~\cite{KNS-2018} for a discussion of these conditions and of their relevance in the study of large-time asymptotics of trajectories for PDEs with random forcing. Here we only mention that the approximate controllability hypothesis~\hyperlink{(H2)}{(H$_2$)} imposed in this paper is weaker than the dissipativity condition of~\cite{KNS-2018} and allows one to treat a much larger class of PDEs that possess several steady states. A drawback is that the main result of this paper does not give any estimate for the rate of convergence (to the unique stationary measure), which remains an interesting open problem.

\smallskip
To formulate our main abstract result, we introduce some notation. Since $\{\eta_k\}$ are i.i.d.\ random variables, the trajectories of~\eqref{1.1} issued from~$X$ form a discrete-time Markov process, which is denoted by~$(u_k,\IP_u)$. We shall write $P_k(u,\Gamma)$ for its transition function and~$\PPPP_k:C_b(X)\to C_b(X)$ and $\PPPP_k^*:\PP(X)\to\PP(X)$ for the corresponding Markov operators. 

Let us recall that a measure $\mu \in \PP(H)$ is  said to be {\it stationary\/} for $(u_k,\IP_u)$ if~$\PPPP^*_1\mu=\mu$. The continuity of~$S$ implies that $(u_k,\IP_u)$ possesses the Feller property, and by the Bogolyubov--Krylov argument and the compactness of~$X$, there is at least one stationary measure. We wish to investigate its uniqueness and stability.

Let ~$\|\cdot\|_L^*$ be the dual-Lipschitz metric on the space of probability measures on~$X$ (see Notation). The following theorem, which is the main result of this paper, describes the behaviour of~$\PPPP_k^*$ as the time goes to infinity. 

\begin{theorem} \label{t1.1}
Suppose that Hypotheses \hyperlink{H1}{\rm(H$_1$)}--\hyperlink{H4}{\rm(H$_4$)} are satisfied. Then the Markov process $(u_k,\IP_u)$ has a unique stationary measure $\mu\in \PP(X)$, and there is a sequence of positive numbers~$\{\gamma_k\}$ going to zero as $k\to\infty$ such that
\begin{equation} \label{1.6}
\|\PPPP_k^*\lambda-\mu\|_L^*\le \gamma_k\quad\mbox{for all $k\ge0$ and $\lambda\in\PP(X)$}. 
\end{equation}
\end{theorem}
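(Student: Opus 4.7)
The plan is to reduce the theorem to a uniform coupling estimate of the form
\[
\sup_{u,u'\in X}\|P_k(u,\cdot)-P_k(u',\cdot)\|_L^*\longrightarrow 0\quad\text{as }k\to\infty,
\]
from which uniqueness of a stationary measure and the bound~\eqref{1.6} both follow by standard arguments. Existence of an invariant $\mu\in\PP(X)$ is immediate: Hypothesis~\hyperlink{H1}{(H$_1$)} makes $(u_k,\IP_u)$ Feller, the compact set $X$ is invariant under $\PPPP_1^*$, and the Bogolyubov--Krylov procedure applies.

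The coupling is built from two ingredients. The first is a \emph{uniform hitting estimate} for an arbitrary neighbourhood of $\hat u$: for each $\varrho>0$ I would produce an integer $m=m(\varrho)\ge 1$ and a constant $p_0=p_0(\varrho)>0$ such that
\[
\inf_{u\in X}\IP_u\bigl\{\|u_m-\hat u\|<\varrho\bigr\}\ge p_0.
\]
Given $u\in X$, Hypothesis~\hyperlink{H2}{(H$_2$)} supplies a control sequence $(\zeta_1,\dots,\zeta_{m(u)})\in\KK^{m(u)}$ driving the trajectory to within $\varrho/2$ of~$\hat u$; continuity of $S_{m(u)}$ coming from~\hyperlink{H1}{(H$_1$)} opens a whole neighbourhood of such controls, and since $\KK=\supp\ell$ the corresponding event has strictly positive probability. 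A compactness/continuity argument on~$X$ then upgrades the pointwise bound to a uniform one.

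The second ingredient is a \emph{local coupling lemma} inside a small ball around $\hat u$, of the kind established in~\cite{KNS-2018}. Hypothesis~\hyperlink{H3}{(H$_3$)} guarantees that the range of $D_\eta S(u,\eta)$ is dense in~$H$ for $\ell$-almost every $\eta$, while Hypothesis~\hyperlink{H4}{(H$_4$)} furnishes Lipschitz marginal densities that allow a Girsanov-type change of measure. Combined with the smoothness from~\hyperlink{H1}{(H$_1$)}, these let one construct, for sufficiently small $\varrho>0$, a one-step coupling $(u_1,u_1')$ of the chain starting from any two points of $B(\hat u,\varrho)$ that contracts $\|u_1-u_1'\|$ by a definite factor with probability bounded below and keeps both images inside $B(\hat u,\varrho)$. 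The plan here is to check that the arguments of~\cite{KNS-2018} apply verbatim at the point~$\hat u$, observing that they do not require $\hat u$ to be a fixed point of~$S$.

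Finally, I would concatenate the two blocks. Starting from an arbitrary pair $(u,u')\in X\times X$, I apply the hitting estimate in parallel on each chain to drive them into $B(\hat u,\varrho)\times B(\hat u,\varrho)$ with probability at least $p_0^2$ in $m$ steps; once inside, I iterate the local coupling until either the chains have coupled or a failure forces a restart, in which case I reapply the hitting step. The main difficulty will be the measurable bookkeeping of these restarts on a single probability space so that the two chains remain adapted to a common filtration, and the extraction from the geometric recurrence of successful local blocks of an upper bound on $\|P_k(u,\cdot)-P_k(u',\cdot)\|_L^*$ in terms of the probability of not having coupled by time~$k$. Because neither the hitting estimate nor the local coupling provides any quantitative rate, this procedure yields only $\gamma_k\to 0$ with no explicit speed, as stated. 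Uniqueness of~$\mu$ then follows by applying the uniform estimate to any two candidate stationary measures.
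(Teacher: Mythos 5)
Your high-level architecture is the right one and matches the paper's: existence via Bogolyubov--Krylov, then a reduction to a recurrence property (uniform hitting of a small ball around $\hat u$, which you derive correctly from \hyperlink{H2}{(H$_2$)}, continuity and compactness) plus a stability property near $\hat u$; this is exactly the criterion of Theorem~\ref{t5.1}. The recurrence half of your argument is sound.

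The gap is in the stability half, and it is precisely the point where this paper departs from~\cite{KNS-2018}. First, your assertion that the one-step local coupling ``keeps both images inside $B(\hat u,\varrho)$'' has no justification: $\hat u$ is only a point to which the system can be approximately steered, not a stable equilibrium, so after one step the two chains can be anywhere in $X$. (This is harmless in itself --- what the coupling construction of Proposition~\ref{P:2.5} actually needs is only $\|u_k-v_k\|\le\delta$, not proximity to $\hat u$ --- but it signals that you are importing the stable-equilibrium picture of~\cite{KNS-2018}, which is exactly the hypothesis being removed.) Second, and more seriously, the contraction $\|S(u,\eta)-S(u',\varPsi^{u,u'}(\eta))\|\le\theta\|u-u'\|$ holds only on an event of probability $\ge 3/4$, and on the complement the distance between the chains can \emph{grow} by a factor $\theta^{-1}$. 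Your scheme of iterating the local coupling ``until the chains have coupled or a failure forces a restart'' cannot work: the chains never couple exactly (the change of measure in~\eqref{1.12} has a nonzero cost, so one only gets an asymptotic coupling in which $\|u_k-v_k\|$ must decay geometrically forever), and the probability of an unbroken run of successes is $(3/4)^k\to0$, so restarting at the first failure destroys the estimate. The paper's resolution --- the key new idea, absent from your proposal --- is to \emph{not} restart on failure but to compare the logarithm of $\|u_k-v_k\|$ (in units of $\log\theta$) with a conditional random walk of drift $2p-1>0$, $p=3/4$, and to use the hitting-probability and large-deviation estimates of Proposition~\ref{p2.1} and Corollary~\ref{c2.2} to show that, if the initial distance is at most $\delta\theta^{2l}$, then with probability $p_l\to1$ the walk never falls low enough for the distance to exceed $\delta$ and the distance still decays like $q^k\|u-u'\|^{1/2}$. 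This in turn requires the measure-theoretic surgery of Lemmas~\ref{l5.2}--\ref{l5.3} (using the density assumption in \hyperlink{H4}{(H$_4$)}) to replace the inequality $\IP(\Gamma_k\mid\FF_{k-1})\ge 3/4$ by an exact equality, without which the random-walk formulas do not apply. Finally, the bound on $\|P_k(u,\cdot)-P_k(u',\cdot)\|_L^*$ is not ``the probability of not having coupled by time $k$'' but the sum of the per-step total-variation costs $Cq^{\beta(k-1)}\|u-u'\|^{\alpha\beta}$ along the geometrically contracting trajectory, plus the probability that the contraction regime fails; establishing this is the content of Theorem~\ref{T:1.2} and also needs to be carried out.
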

A proof of this result is given in Section~\ref{s3}. Here we discuss very  briefly the main idea, postponing the details to Section~\ref{s3.1}.

A sufficient condition for the validity of the conclusions is given by  Theorem~\ref{t5.1} in the Appendix. According to that result, it suffices to check the recurrence and stability properties. The recurrence is a simple consequence of the approximate controllability to the point~$\hat u$; see Hypothesis~\hyperlink{H2}{(H$_2$)}. The proof of stability is much more involved and will follow from two properties, \hyperlink{(A)}{(A)} and~\hyperlink{(B)}{(B)}, of Theorem~\ref{T:1.2}. Their verification is based on a key new idea of this work, which reduces the required properties to a study of a conditional random walk. The latter is discussed in Section~\ref{s2}, together with  an auxiliary result on the transformation of the noise space~$E$ (which was established in~\cite{KNS-2018}).

\section{Preliminary results}
\label{s2} 

\subsection{Transformation in the control space}
Given a number $\delta>0$, we set $D_\delta:=\{(u,u')\in X\times H:\|u-u'\|\le\delta\}$. The following proposition is established in Section~3.2 of~\cite{KNS-2018} (see Proposition~3.3 with $\sigma=1/4$). 

\begin{proposition}\label{P:2.5} 
Suppose that Hypotheses~\hyperlink{H1}{\rm(H$_1$)}, \hyperlink{H3}{\rm(H$_3$)}, and~\hyperlink{H4}{\rm(H$_4$)} are satisfied. Then, for any $\theta\in(0,1)$, there are positive numbers $C$, $\beta$, and~$\delta$, a family of Borel subsets $\{\KK^{u,\theta}\subset \KK^u\}_{u\in X}$, and a measurable   mapping $\varPhi : X\times H \times E  \to E$ such that $\varPhi^{u,u'}(\eta)=0$ if $\eta\notin \KK^{u,\theta}$ or $u'=u$, and
\begin{align}
\ell(\KK^{u,\theta})&\ge 3/4,
\label{3.7}\\
\|\ell-\varPsi_*^{u,u'}( \ell)\|_{\mathrm{var}}&\le C\, \|u-u'\|^\beta,
 \label{3.8}\\
 \|S(u,\eta)- S(u',\varPsi^{u,u'}(\eta))\| &\le \theta\, \|u-u'\|,
\label{3.9}
\end{align} 
 where   $\varPsi^{u,u'}(\eta):=\eta+\varPhi^{u,u'}(\eta)$, $\varPsi_*^{u,u'}( \ell)$ is   the image of the measure $\ell$ under~$\varPsi^{u,u'}$,  and  $(u,u')\in D_\delta$ and $\eta\in \KK^{u,\theta}$ are arbitrary points. 
\end{proposition}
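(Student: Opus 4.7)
The plan is to construct $\varPhi^{u,u'}(\eta)$ as a \emph{coupling shift} that approximately kills the difference between $S(u,\eta)$ and $S(u',\eta+\varPhi^{u,u'}(\eta))$, exploiting density of the range of $D_\eta S$ (Hypothesis~\hyperlink{H3}{(H$_3$)}), Lipschitz continuity of the noise densities (Hypothesis~\hyperlink{H4}{(H$_4$)}), and the compact embedding $V\hookrightarrow H$ from Hypothesis~\hyperlink{H1}{(H$_1$)}. The first step is to Taylor-expand in~$\eta$:
$$
S(u',\eta+v)-S(u,\eta)=\bigl(S(u',\eta)-S(u,\eta)\bigr)+(D_\eta S)(u',\eta)v+R(v),
$$
with $\|R(v)\|\le C\|v\|^2$ on bounded sets, and a further expansion in~$u$ gives $\|S(u',\eta)-S(u,\eta)\|\le C\|u-u'\|$. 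So one looks for $v=v(u,u',\eta)$ of norm comparable to $\|u-u'\|$ that approximately solves $(D_\eta S)(u',\eta)v=S(u,\eta)-S(u',\eta)$ in~$H$.

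To build such a $v$ measurably, I would fix an increasing family of orthogonal finite-rank projections $\Pi_N$ in~$H$ with $\Pi_N\to I$ strongly; by compactness of $V\hookrightarrow H$, this convergence is uniform on bounded subsets of~$V$, so $\|(I-\Pi_N)z\|\to 0$ uniformly for $z$ in a bounded set of~$V$. For $\eta\in\KK^u$ the operator $\Pi_N(D_\eta S)(u,\eta):E\to\Pi_N H$ is surjective for all sufficiently large~$N$; let $\KK^{u,\theta}$ be the set of $\eta\in\KK^u$ on which a fixed $N=N(\theta)$ both ensures surjectivity and a uniform bound $M$ on the right-inverse norm. An Egorov-type argument together with~\hyperlink{H3}{(H$_3$)} and the continuity of $D_\eta S$ from~\hyperlink{H1}{(H$_1$)} yields $\ell(\KK^{u,\theta})\ge 3/4$ uniformly in~$u$. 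Then set
$$
\varPhi^{u,u'}(\eta):=\bigl[\Pi_N(D_\eta S)(u',\eta)\bigr]^{+}\bigl(S(u,\eta)-S(u',\eta)\bigr)
$$
on $\KK^{u,\theta}$ and~$0$ elsewhere, where $[\,\cdot\,]^{+}$ denotes the minimum-norm right inverse. With $\delta$ small, $\|v\|\le CM\|u-u'\|$ and the quadratic remainder is dominated, so~\eqref{3.9} follows from
$$
\|S(u',\eta+v)-S(u,\eta)\|\le \|(I-\Pi_N)(S(u,\eta)-S(u',\eta))\|+C\|v\|^2\le\theta\|u-u'\|.
$$

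For~\eqref{3.8}, the shift $\varPhi^{u,u'}(\eta)$ is supported in the $N$-dimensional subspace $\lspan(e_1,\dots,e_N)$ of~$E$ and has norm $\le CM\|u-u'\|$. Using independence of the $\xi_{jk}$ and Lipschitz continuity of the densities $\rho_j$ from~\hyperlink{H4}{(H$_4$)}, the total variation distance between~$\ell$ and its translate by any deterministic $h\in\lspan(e_1,\dots,e_N)$ is bounded by $\sum_{j\le N}\Lip(\rho_j)\,|h_j|/|b_j|$, which is linear in $\|h\|$ with an $N$-dependent constant. Extending this to the random, $\eta$-dependent shift $\varPhi^{u,u'}$ via a Jacobian-type change-of-variables argument costs only an extra $N$-dependent factor. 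Letting $N$ grow slowly as $\|u-u'\|\to 0$ balances the two effects and produces~\eqref{3.8} with an exponent $\beta\in(0,1)$.

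The main obstacle is precisely this tension between~\eqref{3.8} and~\eqref{3.9}: the contraction pushes $N$ large (so that $\Pi_N$ captures most of the target in~$H$ and the pseudo-inverse is well conditioned), whereas the TV bound prefers $N$ small (so that few densities $\rho_j$ are perturbed and their Lipschitz constants do not blow up). Turning this heuristic balance into an honest exponent $\beta\in(0,1)$ requires precise measurability of $\eta\mapsto\varPhi^{u,u'}(\eta)$ (supplied by the explicit minimum-norm pseudo-inverse), uniform control over $(u,u')\in D_\delta$ of the right-inverse norm on $\KK^{u,\theta}$, and a quantitative Lipschitz-in-TV estimate under shifts of finitely many coordinates; this last point is where~\hyperlink{H4}{(H$_4$)} plays an essential quantitative role.
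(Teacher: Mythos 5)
Your outline reconstructs, in its essentials, the argument the paper actually relies on: the paper does not prove Proposition~\ref{P:2.5} itself but quotes it from Section~3.2 of~\cite{KNS-2018} (Proposition~3.3 there with $\sigma=1/4$), and that proof is built exactly from the ingredients you list --- Taylor expansion in $\eta$, finite-rank projections $\Pi_N$ together with the compactness of $V\hookrightarrow H$, a measurable right inverse of $\Pi_N(D_\eta S)$ on a set $\KK^{u,\theta}$ of $\ell$-measure $\ge 3/4$ obtained from~\hyperlink{H3}{(H$_3$)} by an Egorov/compactness argument, and a total-variation estimate for finite-dimensional, $\eta$-dependent shifts of the decomposable measure $\ell$ coming from~\hyperlink{H4}{(H$_4$)}.

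Two points deserve correction. First, the device of ``letting $N$ grow slowly as $\|u-u'\|\to0$'' is both unnecessary and in tension with the statement: the sets $\KK^{u,\theta}$ and the uniform right-inverse bound are indexed by $u$ alone and cannot depend on $u'$. It is not needed because~\hyperlink{H1}{(H$_1$)} gives $\|S(u,\eta)-S(u',\eta)\|_V\le C\|u-u'\|$, so by compactness of $V\hookrightarrow H$ one has $\|(I-\Pi_N)(S(u,\eta)-S(u',\eta))\|\le\e_N C\|u-u'\|$ with $\e_N\to0$ uniformly over the relevant bounded sets; a \emph{fixed} $N=N(\theta)$ with $\e_N C\le\theta/2$ then yields~\eqref{3.9} once the quadratic remainder is absorbed for $\delta$ small. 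The exponent $\beta$ in~\eqref{3.8} does not come from tuning $N$ but from the measure-transformation lemma itself. Second, that lemma is the real technical content of the proposition and is only gestured at in your last step: $\varPsi^{u,u'}$ is not a deterministic translation but a perturbation of the identity by an $\eta$-dependent, cut-off map with values in a fixed finite-dimensional subspace of $E$, and bounding $\|\ell-\varPsi^{u,u'}_*(\ell)\|_{\mathrm{var}}$ requires control of $D_\eta\varPhi^{u,u'}$ (again supplied by~\hyperlink{H1}{(H$_1$)}, since $\varPhi$ is the pseudo-inverse applied to a quantity that is $O(\|u-u'\|)$ in $C^1$) together with the contribution of the set where the cut-off switches; this is precisely the ``transformation of decomposable measures'' result proved in~\cite{KNS-2018}, and without it your argument is an outline rather than a proof. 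A minor further remark: you invert $\Pi_N(D_\eta S)(u',\eta)$ while $\KK^{u,\theta}$ is defined through the derivative at $u$, so a perturbation argument in $u$ (valid for $\delta$ small) is needed to transfer surjectivity and the right-inverse bound.
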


\subsection{Asymptotic properties of a conditional random walk}
\label{s2.1}
Given a real-valued random variable~$\xi$ defined on a probability space $(\Omega,\FF,\IP)$ and a sub-$\sigma$-algebra $\GG\subset\FF$, we denote by~$\mu_\xi(\omega,\dd x)$ the {\it conditional law of~$\xi$ given~$\GG$\/}. In other words, $\mu_\xi$ is a random probability measure on~$\R$ with the underlying space~$(\Omega,\GG)$ such that 
\begin{equation*}
\E\bigl(f(\xi)\,|\,\GG)=\int_\R f(x)\mu_\xi(\omega,\dd x),
\end{equation*}
where $f:\R\to\R$ is any Borel-measurable function such that $f(\xi)\in L^1(\Omega,\IP)$. 
In what follows, we shall write $\IP(\xi\in\Gamma\,|\,\GG)$ for $\mu_\xi(\omega,\Gamma)$. 

\smallskip
We now fix a number $p\in(\frac12,1)$ and filtration $\{\FF_k\}_{k\ge0}$ and consider a sequence of random variables $\{w_k\}_{k\ge1}$ such that $w_k$ is $\FF_k$-measurable and 
\begin{equation} \label{2.1}
\IP\{w_k=1\,|\,\FF_{k-1}\}=p, \quad \IP\{w_k=-1\,|\,\FF_{k-1}\}=1-p. 
\end{equation}
Let us define 
\begin{equation}\label{2.2}
\zeta_k=\sum_{j=1}^kw_j, \quad M_k=\zeta_k-(2p-1)k,
\end{equation}
with the convention $\zeta_0=M_0=0$. 

\begin{proposition} \label{p2.1}
\begin{itemize}
\item[\bf(a)]
The family $\{\zeta_k^{(m)}=m+\zeta_k\}_{m\in\Z}$ is a discrete-time Markov process with the phase space~$\Z$. 
\item[\bf(b)]
For any $\e>0$ there is a random time~$\tau=\tau(\e,p)\ge1$ and a number $\alpha=\alpha(\e,p)>0$ such that 
\begin{gather}
M_k\ge -\e k\quad\mbox{for $k\ge\tau$},\label{2.4}\\
\E \,e^{\alpha\tau}<\infty. \label{2.5}
\end{gather}
\item[\bf(c)]
For any integer $l\ge0$, we have
\begin{equation}\label{2.3}
\IP\bigl\{\zeta_k> -l\mbox{ for all }k\ge0\bigr\}=1-\bigl(\tfrac{1-p}{p}\bigr)^l.
\end{equation}

\end{itemize}
\end{proposition}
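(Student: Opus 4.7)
For part (a), I would first observe that the conditional laws in \eqref{2.1} are constants, independent of $\FF_{k-1}$. Consequently $w_k$ is independent of $\FF_{k-1}$, the sequence $\{w_k\}_{k\ge 1}$ is i.i.d.\ with $\IP\{w_k=\pm 1\}=p,\,1-p$, and $\zeta_k$ is the standard biased simple random walk on $\Z$ starting from $0$. Adding an integer $m$ merely shifts the starting point, so $\{m+\zeta_k\}$ is a $\Z$-valued Markov chain with transition probabilities $q(n,n+1)=p$, $q(n,n-1)=1-p$, as required.

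For part (b), I would work with the centred sum $M_k=\sum_{j=1}^k(w_j-(2p-1))$, which is a sum of i.i.d.\ bounded mean-zero variables. A Hoeffding-type inequality (or Cram\'er's theorem) yields a constant $c_\e>0$ with
\begin{equation*}
  \IP\{M_k\le -\e k\}\le e^{-c_\e k}\qquad\text{for all }k\ge 1.
\end{equation*}
By Borel--Cantelli, the random variable
\begin{equation*}
  \tau:=1+\sup\bigl\{k\ge 0: M_k<-\e k\bigr\}
\end{equation*}
is finite almost surely, and by construction \eqref{2.4} holds. Summing the Hoeffding bound gives $\IP\{\tau>n\}\le \sum_{k\ge n}e^{-c_\e k}\le C e^{-c_\e n}$, so \eqref{2.5} follows for any $\alpha\in(0,c_\e)$.

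For part (c), I would reduce to the standard gambler's ruin computation. Let $T_{-l}:=\inf\{k\ge 0:\zeta_k=-l\}$ and $q_l:=\IP\{T_{-l}<\infty\}$. The strong Markov property at the successive hitting times of $-1,-2,\dots,-l$, together with spatial homogeneity of $\zeta_k$, gives $q_l=q_1^l$. A one-step decomposition according to the value of $w_1$ yields $q_1=(1-p)+p\,q_1^2$, whose roots are $1$ and $(1-p)/p$. Since $p>\tfrac12$, the SLLN gives $\zeta_k/k\to 2p-1>0$ almost surely, so $\zeta_k\to+\infty$ and the walk does not return arbitrarily far to the left; in particular $q_1<1$, forcing $q_1=(1-p)/p$. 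Thus $\IP\{\zeta_k>-l\text{ for all }k\ge 0\}=1-q_1^l=1-\bigl(\tfrac{1-p}{p}\bigr)^l$.

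The only point that needs some care is the exponential tail in (b); everything else is standard random-walk theory. I do not anticipate any serious obstacle: the Hoeffding estimate is immediate from the boundedness of $w_j$, and the passage from pointwise tail bounds on $M_k$ to an exponential moment of the last-crossing time $\tau$ is a routine Borel--Cantelli-plus-summation argument.
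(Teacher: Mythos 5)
Your proof is correct, and the decisive observation is one you should make explicit: since the conditional probabilities in \eqref{2.1} are \emph{deterministic constants} and $w_k\in\{-1,1\}$ almost surely, for every $A\in\FF_{k-1}$ one has $\IP(\{w_k=1\}\cap A)=\E\bigl(I_A\,\IP\{w_k=1\,|\,\FF_{k-1}\}\bigr)=p\,\IP(A)$, so $\sigma(w_k)$ is independent of $\FF_{k-1}$, and an induction then gives that $(w_1,\dots,w_k)$ are mutually independent; hence $\{w_k\}$ really is an i.i.d.\ Bernoulli sequence. Spell this out, because the paper's own footnote asserts that the jumps are ``not independent'' and therefore avoids the reduction, running every step conditionally instead: in (b) it iterates the conditional exponential-moment bound $\E(e^{-t\tilde w_k}\,|\,\FF_{k-1})\le\exp(2p(1-p)t^2+4t^3)$ to get \eqref{4.20} before applying Chebyshev and Borel--Cantelli, and in (c) it conditions on $\FF_1$ to derive the difference equation for the two-boundary ruin probabilities $P_m(a,b)$ and then lets $b\to\infty$. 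Your part (b) is essentially the paper's argument (exponential Chebyshev plus Borel--Cantelli plus summation of the tail), just phrased via Hoeffding for independent summands. Your part (c) is a genuinely different, one-boundary route: multiplicativity $q_l=q_1^l$ via the strong Markov property at successive hitting times, the quadratic $q_1=(1-p)+p\,q_1^2$, and transience ($\zeta_k/k\to2p-1>0$) to discard the root $1$. This is shorter, but it leans on the strong Markov property and spatial homogeneity of the walk at random times, which is exactly what the i.i.d.\ reduction buys you; the paper's two-boundary computation is self-contained under the conditional hypothesis \eqref{2.1} alone and would survive even if the increments were merely conditionally balanced rather than independent. With the independence argument written out, there is no gap.
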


\begin{proof}
{\bf(a)}\ 
We fix a bounded function $f:\Z\to\R$ and use~\eqref{2.1} to write
\begin{align*}
\E\bigl(f(\zeta_k^{(m)})\,|\,\FF_{k-1}\bigr) 
&=\E\bigl(f(\zeta_{k-1}^{(m)}+w_k)\,|\,\FF_{k-1}\bigr) \\
&=\E\bigl(f(\zeta_{k-1}^{(m)}+1)I_{\{w_k=1\}}
+f(\zeta_{k-1}^{(m)}-1)I_{\{w_k=-1\}})\,|\,\FF_{k-1}\bigr)\\
&=f(\zeta_{k-1}^{(m)}+1)\,\IP\{w_k=1\,|\,\FF_{k-1}\}\\
&\quad +f(\zeta_{k-1}^{(m)}-1)\,\IP\{w_k=-1\,|\,\FF_{k-1}\}\\
&=f(\zeta_{k-1}^{(m)}+1)p+f(\zeta_{k-1}^{(m)}-1)(1-p)
=\E f(\zeta_1^{(l)})\,\bigr|_{l=\zeta_{k-1}^{(m)}}. 
\end{align*}

\smallskip
{\bf(b)}\ 
Let us note that   $\widetilde w_j=w_j-(2p-1)$ are random variables whose absolute values are bounded by~$2$ and variances are equal to~$\sigma_p^2=4p(1-p)$. By inequality~(7) with $M=2$ in the proof of Lemma~1 of~\cite[Section~12]{lamperti1996}, we have 
\begin{equation} \label{4.19}
\E\,e^{-t\tilde w_j}\le \exp\bigl(2p(1-p)t^2+4t^3\bigr),
\end{equation}
where $0\le t\le 1$ is arbitrary. Combining~\eqref{4.19} with the Markov property, we derive
$$
\E\,e^{-t M_k} 
=\E\,\E\bigl(e^{-t M_k}\,|\,\FF_{k-1}\bigr)
\le \exp\bigl(2p(1-p)t^2+4t^3\bigr)\E \,e^{-tM_{k-1}}.
$$
Iterating this inequality, we obtain
\begin{equation} \label{4.20}
\E\,e^{-t M_k} \le e^{(2p(1-p)t^2+4t^3)k}, \quad k\ge1.
\end{equation}

We now fix $\e>0$, define the events $\Gamma_k=\{-M_k\ge \e k\}$, and use the Borel--Cantelli lemma. It follows from~\eqref{4.20} and the Chebyshev inequality
$$
\IP(\Gamma_k)\le e^{-t\e k}\,\E\,e^{-tM_k}
\le \exp\bigl(-tk\bigl[\e-2p(1-p)t-4t^2\bigr]\bigr). 
$$
Taking $t=\frac{\e}{4p(1-p)}$ and assuming that $\e\le p^2(1-p)^2$, we derive
\begin{equation} \label{4.21}
\IP(\Gamma_k)\le \exp\bigl(-\gamma k\bigr), \quad \gamma=\gamma(\e,p)=\frac{\e^2}{16p(1-p)}. 
\end{equation}
Since the series $\sum_k\IP(\Gamma_k)$ converges, the random variable 
$$
\sigma=\min\{n\ge1:M_k\ge -\e k\mbox{ for $k\ge n$}\}
$$
is almost surely finite. Moreover, in view of~\eqref{4.21}, for $0<\alpha<\gamma$, we have
\begin{align*}
\E\, e^{\alpha\sigma}&=\sum_{k=1}^\infty \IP\{\sigma=k\}e^{\alpha k} 
\le e^{\alpha}+\sum_{k=2}^\infty \IP(\Gamma_{k-1})e^{\alpha k}
\le e^{\alpha}+\sum_{k=2}^\infty e^{-\gamma k+\alpha k}<\infty.
\end{align*}
We thus obtain~\eqref{2.5} with $\alpha=\frac{\e^2}{32p(1-p)}$.

\smallskip 
{\bf(c)}\ 
Let us consider the hitting time 
$$
\tau_l=\min\{k\ge1: \zeta_k=l\},
$$
 with the convention that $\tau_l=\infty$ if~$\zeta_k$ does not reach~$l$. We need to prove that, for any~$l\ge0$, 
\begin{equation} \label{2.6}
\IP\{\tau_{-l}<\infty\}=\bigl(\tfrac{1-p}{p}\bigr)^l. 
\end{equation} 
To this end, given any integers $a\le m\le b$, we define
$$
P_m(a,b)=\IP_m\{\tau_a<\tau_b\},
$$
where the subscript~$m$ in the right-hand side indicates that the probability is calculated 
for~$\zeta_k^{(m)}$.   Using (b) with any $\e\in (0,2p-1)$, it is straightforward to see that 
$$
\IP\{ \tau_b<\ty\}=1 \quad\mbox{for any  $ b\ge  0$}.
$$
It follows that, up to sets of measure zero, for any $b\ge0$, we have
$$
 \{\tau_{-l}<\infty\}=\bigcup_{r=b}^\infty\{\tau_{-l}<\tau_r\}.
$$
Since $\{\tau_{-l}<\tau_r\}$ is an increasing sequence with respect to~$r$, we conclude that
\begin{equation} \label{2.7}
\IP\{\tau_{-l}<\infty\}=\lim_{b\to\infty} \IP\{\tau_{-l}<\tau_b\}= \lim_{b\to\infty}P_0(-l,b).
\end{equation} 
If we prove that 
\begin{equation} \label{2.8}
P_m(a,b)=\frac{\varkappa_p^m-\varkappa_p^b}{\varkappa_p^a-\varkappa_p^b}, 
\end{equation} 
where $\varkappa_p=\frac{1-p}{p}$, then the required equality~\eqref{2.6} will follow from~\eqref{2.7}.

To prove~\eqref{2.8}, we apply an argument\footnote{Note that our situation is slightly different, since the jumps~$w_j$ are not independent.} in~\cite[Section~XIV.2]{feller1968} (see the proof of~(2.8) there). Using the Markov property 
and the fact that $\zeta_1^{(m)}=m\pm1$ on the set $w_1=\pm1$, we write 
\begin{align*}
P_m(a,b)&=\E_m\IP_m\{\tau_a<\tau_b\,|\,\FF_1\}
=\E_m\Bigl(\IP_{\zeta_1^{(m)}}\{\tau_a<\tau_b\}\bigl(I_{\{w_1=1\}}+I_{\{w_1=-1\}}\bigr)\Bigr)\\
&=pP_{m+1}(a,b)+(1-p)P_{m-1}(a,b).
\end{align*}
We thus obtain a difference equation for the numbers $\{P_m(a,b),a\le m\le b\}$, which satisfy
the boundary conditions $P_a(a,b)=1$ and $P_b(a,b)=0$. 
A simple calculation shows that the only solution is given by~\eqref{2.8}.
\end{proof}

\begin{corollary} \label{c2.2}
For any $c\in(0,2p-1)$, there is a sequence $\{p_l\}\subset \R$   depending only on $c$ and $p$   such that 
\begin{gather} 
 \IP\{\zeta_k\ge -l+ck\mbox{ for all $k\ge0$}\}\ge p_l \quad \mbox{for all $l\ge1$},\label{2.12}\\
 p_l\to 1 \quad \mbox{as $l\to \ty$}. \label{2.12b} 
\end{gather}
\end{corollary}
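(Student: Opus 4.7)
The plan is to reduce the problem to Proposition~\ref{p2.1}(b) by choosing the deviation parameter $\e$ small enough that the positive drift $2p-1$ still dominates the slope $c$, and then splitting the analysis according to whether the stabilisation time~$\tau$ from~(b) has already occurred. Concretely, I would fix
\[
\e=\tfrac12(2p-1-c)>0,
\]
so that $2p-1-\e=c+\e>c$, and invoke part~(b) to obtain a random time $\tau=\tau(\e,p)\ge1$ and a constant $\alpha=\alpha(\e,p)>0$ with $M_k\ge-\e k$ for all $k\ge\tau$ and $\E\,e^{\alpha\tau}<\infty$.

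The core observation is that on the event $G_l:=\{\tau\le l/(1+c)\}$ the inequality $\zeta_k\ge -l+ck$ holds for every $k\ge 0$. For $k\ge\tau$, the definition of $\tau$ together with the identity $\zeta_k=M_k+(2p-1)k$ gives
\[
\zeta_k\ge (2p-1-\e)k=(c+\e)k\ge ck\ge -l+ck.
\]
For $0\le k<\tau$, the deterministic lower bound $\zeta_k\ge -k$ (each $w_j\in\{-1,1\}$) yields $\zeta_k\ge -\tau\ge -l/(1+c)$, while simultaneously $-l+ck\le -l+c\tau\le -l+cl/(1+c)=-l/(1+c)$, so again $\zeta_k\ge -l+ck$. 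Hence the event in~\eqref{2.12} contains~$G_l$.

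It then suffices to bound $\IP(G_l^{\,c})=\IP\{\tau>l/(1+c)\}$ by Markov's inequality applied to $e^{\alpha\tau}$:
\[
\IP\{\tau>l/(1+c)\}\le e^{-\alpha l/(1+c)}\,\E\,e^{\alpha\tau}.
\]
Setting $p_l:=1-\bigl(\E e^{\alpha\tau}\bigr)e^{-\alpha l/(1+c)}$ (and replacing negative values by~$0$) produces a sequence depending only on $c$ and $p$ which satisfies both~\eqref{2.12} and~\eqref{2.12b}.

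There is no genuine obstacle here; the only point requiring a little care is making the two regimes meet, which is why the threshold is chosen as $l/(1+c)$: this choice equalises the deterministic worst-case bound $-\tau$ and the target line $-l+c\tau$ at $k=\tau$, ensuring the split argument closes cleanly.
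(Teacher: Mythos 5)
Your argument is correct, but it handles the early-time regime differently from the paper, and in a way that is slightly more economical. The paper splits at the deterministic time $k=l$: for $k\ge l$ it uses part (b) of Proposition~\ref{p2.1} exactly as you do (with $\e=2p-1-c$ rather than $\tfrac12(2p-1-c)$, which makes no difference), while for $0\le k\le l$ it invokes the gambler's-ruin formula~\eqref{2.3} of part (c) to bound the probability that the walk ever dips below $-[(1-c)l]$, arriving at $p_l=1-\bigl(\tfrac{1-p}{p}\bigr)^{[(1-c)l]}-Ce^{-\alpha l}$. You instead split at the random time $\tau$ itself, restrict to the event $\{\tau\le l/(1+c)\}$, and use the deterministic bound $\zeta_k\ge -k$ for $k<\tau$; the threshold $l/(1+c)$ is chosen precisely so that the two regimes match, and the whole estimate then follows from the exponential moment~\eqref{2.5} via Markov's inequality. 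The net effect is that your proof does not use part (c) of Proposition~\ref{p2.1} at all (which, in this paper, is proved only to serve this corollary), at the cost of a slightly worse but still exponential rate $1-Ce^{-\alpha l/(1+c)}$ --- immaterial here, since only $p_l\to1$ is needed. The one point to state explicitly is that the constant $\E\,e^{\alpha\tau}$ is bounded uniformly over all admissible sequences $\{w_k\}$ by a quantity depending only on $\e$ and $p$ (this follows from the explicit estimates in the proof of Proposition~\ref{p2.1}(b)); the paper's own proof relies on the same fact when it asserts that $p_l$ depends only on $c$ and $p$, so this is a shared, and harmless, implicit step.
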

\begin{proof} 
Applying~\eqref{2.4} with $\e=2p-1-c$, we see that $\zeta_k\ge ck$ for $k\ge \tau$.
By the Chebyshev inequality and \eqref{2.5}, we have
$$
\pP\{\tau>l\}\le C e^{-\alpha l} \quad\mbox{ for $l\ge1$}.
$$ 
 It follows that 
\begin{equation} \label{2.13}
\IP\{\zeta_k\ge ck\mbox{ for $k\ge l$}\}\ge 1-C e^{-\alpha l}\quad\mbox{ for $l\ge1$}. 
\end{equation}
On the other hand, it follows from~\eqref{2.3} that 
$$
\IP\{\zeta_k\ge -l+ck\mbox{ for $0\le k\le l$}\}\ge1- \bigl(\tfrac{1-p}{p}\bigr)^{[(1-c)l]}
\quad \mbox{for $l\ge1$}, 
$$
where  $[a]$ stands for the integer part of $a$.   Combining this with~\eqref{2.13}, we obtain~\eqref{2.12} with
$$
p_l:=1- \bigl(\tfrac{1-p}{p}\bigr)^{[(1-c)l]} -C e^{-\alpha l}.
$$
Since $c<1$, we have limit \eqref{2.12b}. 
\end{proof}

\subsection{Continuous probability measures}
\label{s5.2}
Let $(\Omega,\FF,\IP)$ be a probability space. We shall say that~$\IP$ is {\it continuous\/} if for any $\Gamma\in\FF$ and $p\in[0,\IP(\Gamma)]$ there is $\Gamma_p\in\FF$ such that $\Gamma_p\subset\Gamma$ and $\IP(\Gamma_p)=p$. Given a measurable space $(X,\BB)$ and measurable mapping $F:\Omega\to X$, we say that~{\it $\IP$ admits a disintegration with respect to~$\Q=F_*(\IP)$\/} if there is a random probability measure $\{P(x,\cdot)\}_{x\in X}$ on~$(\Omega,\FF)$ such that 
\begin{equation} \label{5.21}
\IP\bigl(A\cap F^{-1}(B)\bigr)=\int_B P(x,A)\Q(\dd x)\quad
\mbox{for any $A\in\FF$, $B\in\BB$}. 
\end{equation}
The following result provides a simple sufficient condition for continuity of a probability measure. 

\begin{lemma} \label{l5.2}
Let $(\Omega,\FF,\IP)$ be a probability space and let $F:\Omega\to\R$ be  a measurable mapping such that $\Q=F_*(\IP)$ has a density~$\rho$ with respect to the Lebesgue measure and~$\IP$ admits a disintegration~$P(s,A)$ with respect to~$\Q$. Then~$\IP$ is continuous. 
\end{lemma}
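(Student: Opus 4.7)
The plan is to build, for an arbitrary $\Gamma\in\FF$ with $\IP(\Gamma)>0$, a continuous non-decreasing function whose values cover the whole interval $[0,\IP(\Gamma)]$, and then extract the desired subset $\Gamma_p$ by the intermediate value theorem. The natural candidate is the cumulative distribution of $\IP$ restricted to $\Gamma$ along the ``axis'' $F$, namely
$$
g(t):=\IP\bigl(\Gamma\cap F^{-1}((-\infty,t])\bigr),\qquad t\in\R.
$$

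First, I would apply the disintegration identity~\eqref{5.21} with $A=\Gamma$ and $B=(-\infty,t]$, together with the hypothesis that $\Q=F_*(\IP)$ has density $\rho$ with respect to Lebesgue measure, to rewrite
$$
g(t)=\int_{-\infty}^{t}P(s,\Gamma)\,\rho(s)\,\dd s.
$$
The integrand is measurable, non-negative, and dominated by the $L^1(\R)$-function $\rho$ since $0\le P(s,\Gamma)\le 1$. Consequently $g$ is absolutely continuous on $\R$; in particular it is continuous and non-decreasing. Taking $t=+\infty$ in the disintegration formula yields $g(+\infty)=\IP(\Gamma)$, while clearly $g(-\infty)=0$.

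Given any $p\in[0,\IP(\Gamma)]$, the intermediate value theorem now furnishes a threshold $t_p\in[-\infty,+\infty]$ with $g(t_p)=p$, and the set
$$
\Gamma_p:=\Gamma\cap F^{-1}((-\infty,t_p])
$$
is a measurable subset of $\Gamma$ with $\IP(\Gamma_p)=p$, proving continuity of $\IP$. The case $\IP(\Gamma)=0$ is trivial since one may take $\Gamma_p=\emptyset$.

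There is no real obstacle here: the only point requiring attention is to be sure that $g$ is genuinely continuous (and not merely right-continuous), which is why the absolute continuity of $\Q$ is essential—without a density for $\Q$, the function $g$ could have jumps at atoms of $\Q$ and the argument would break down.
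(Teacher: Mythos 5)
Your proposal is correct and follows essentially the same route as the paper: both define $g(t)=\IP\bigl(\Gamma\cap F^{-1}((-\infty,t])\bigr)$, use the disintegration and the density $\rho$ to write $g$ as $\int_{-\infty}^{t}P(s,\Gamma)\rho(s)\,\dd s$, deduce continuity, and conclude by the intermediate value theorem. No gaps.
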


\begin{proof}
Given $\Gamma\in\FF$, we define $\Gamma(r)=\Gamma\cap F^{-1}((-\infty,r])\in\FF$, where $r\in\R$. Then $\IP(\Gamma(r))$ converges to~$0$ as $r\to-\infty$ and to~$\IP(\Gamma)$ as $r\to+\infty$. Moreover, by~\eqref{5.21}, we have
$$
\IP\bigl(\Gamma(r)\bigr)=\int_{-\infty}^r P(s,\Gamma)\rho(s)\,\dd s, 
$$
whence we see that the function $r\mapsto \IP(\Gamma(r))$ is continuous. The required result follows from the intermediate value theorem. 
\end{proof}

We now apply the above idea to deal with a construction that will be used in Section~\ref{s3}. Namely, let $(\Omega_i,\FF_i,\IP_i)$, $i=1,2$ be two probability spaces and let $(\Omega,\FF,\IP)$ be their direct product. With a slight abuse of notation, we write~$\FF_i$ for the sub-$\sigma$-algebra on~$\Omega$ generated by the natural projection $\Omega\to\Omega_i$. 

\begin{lemma} \label{l5.3}
In addition to the above hypotheses, suppose that the probability space $(\Omega_2,\FF_2,\IP_2)$ and a function $F:\Omega_2\to\R$ satisfy the conditions of Lemma~\ref{l5.2}, and let $\Gamma\in\FF$ be such that, for some $p\in(0,1)$, 
\begin{equation} \label{5.22}
\E(I_\Gamma|\,\FF_1)\ge p\quad\mbox{$\IP$-almost surely}. 
\end{equation}
Then there is $\Gamma'\in\FF$ such that $\Gamma'\subset\Gamma$ and 
\begin{equation} \label{5.23}
\E(I_{\Gamma'}|\,\FF_1)= p\quad\mbox{$\IP$-almost surely}. 
\end{equation}
\end{lemma}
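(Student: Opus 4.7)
\textbf{Proof plan for Lemma~\ref{l5.3}.}
The idea is to use the mapping $F$ on $\Omega_2$ to carve out, fibrewise over $\Omega_1$, a continuously parameterised family of subsets of $\Gamma$ and then apply the intermediate value theorem on each fibre, exactly as in Lemma~\ref{l5.2}, taking care that the resulting threshold depends measurably on $\omega_1$.

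First, for each $\omega_1\in\Omega_1$ I would consider the section $\Gamma^{\omega_1}:=\{\omega_2\in\Omega_2:(\omega_1,\omega_2)\in\Gamma\}\in\FF_2$; by Fubini and hypothesis~\eqref{5.22} we have $\IP_2(\Gamma^{\omega_1})\ge p$ for $\IP_1$-a.e.\ $\omega_1$. Next, for $r\in\R$ I would set
$$
g(\omega_1,r):=\IP_2\bigl(\Gamma^{\omega_1}\cap F^{-1}((-\infty,r])\bigr),
$$
and, using the disintegration $\{P(s,\cdot)\}$ of $\IP_2$ with respect to $\Q=F_*(\IP_2)$ and its density $\rho$, rewrite this as in the proof of Lemma~\ref{l5.2}:
$$
g(\omega_1,r)=\int_{-\infty}^r P(s,\Gamma^{\omega_1})\,\rho(s)\,\dd s.
$$
For fixed $\omega_1$, the function $r\mapsto g(\omega_1,r)$ is continuous, non-decreasing, tends to $0$ as $r\to-\infty$ and to $\IP_2(\Gamma^{\omega_1})\ge p$ as $r\to+\infty$. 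For fixed $r$, $\omega_1\mapsto g(\omega_1,r)$ is $\FF_1$-measurable by Fubini.

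By the intermediate value theorem, the threshold
$$
r(\omega_1):=\inf\{r\in\R:g(\omega_1,r)\ge p\}
$$
is finite and satisfies $g(\omega_1,r(\omega_1))=p$ for $\IP_1$-a.e.\ $\omega_1$. It is $\FF_1$-measurable because
$$
\{\omega_1:r(\omega_1)\le t\}=\{\omega_1:g(\omega_1,t)\ge p\}\in\FF_1.
$$
I would then define
$$
\Gamma':=\bigl\{(\omega_1,\omega_2)\in\Gamma:F(\omega_2)\le r(\omega_1)\bigr\}\in\FF,
$$
so that obviously $\Gamma'\subset\Gamma$, and by Fubini
$$
\E(I_{\Gamma'}\,|\,\FF_1)(\omega_1)=\IP_2\bigl(\Gamma^{\omega_1}\cap F^{-1}((-\infty,r(\omega_1)])\bigr)=g(\omega_1,r(\omega_1))=p,
$$
which is~\eqref{5.23}. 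On the (null) exceptional set where $\IP_2(\Gamma^{\omega_1})<p$ or $r(\omega_1)=\pm\infty$, one modifies $\Gamma'$ arbitrarily without affecting the a.s.\ identity.

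The only genuinely nontrivial point is the verification that $r(\omega_1)$ is $\FF_1$-measurable and that the section argument is compatible with the joint measurability of $g$; everything else reduces to Fubini and the continuity statement already exploited in Lemma~\ref{l5.2}.
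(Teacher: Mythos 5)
Your proposal is correct and follows essentially the same route as the paper: sectioning $\Gamma$ over $\Omega_1$, writing $\IP_2(\Gamma^{\omega_1}\cap F^{-1}((-\infty,r]))$ via the disintegration and the density $\rho$, and choosing a measurable threshold $r(\omega_1)$ by the intermediate value theorem so that the fibrewise measure equals $p$. Your explicit definition $r(\omega_1)=\inf\{r:g(\omega_1,r)\ge p\}$ together with the identity $\{r(\omega_1)\le t\}=\{g(\omega_1,t)\ge p\}$ is in fact a slightly cleaner verification of the measurability step than the paper's appeal to the sub-graph of $\{t:G(\omega_1,t)\le p\}$.
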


\begin{proof}
We first reformulate the lemma in somewhat different terms. Given $\Gamma$ and~$\omega_1\in\FF_1$, we denote 
$$
\Gamma(\omega_1)
=\{\omega_2\in\Omega_2:(\omega_1,\omega_2)\in\Gamma\}. 
$$
It is straightforward to check that 
$$
\E(I_\Gamma|\,\FF_1)=\IP_2\bigl(\Gamma(\omega_1)\bigr). 
$$
Furthermore, the inclusion $\Gamma'\subset\Gamma$ holds if and only if $\Gamma'(\omega_1)\subset\Gamma(\omega_1)$ for any $\omega_1\in\Omega_1$. Thus, the lemma is equivalent to the following assertion: if $\Gamma\in\FF$ is such that $\IP_2(\Gamma(\omega_1))\ge p$ for $\IP_1$-a.e.\ $\omega_1\in\Omega_1$, then there is $\Gamma'\in\FF$  such that $\Gamma'(\omega_1)\subset\Gamma(\omega_1)$ for any $\omega_1\in\Omega_1$ and $\IP_2(\Gamma'(\omega_1))= p$ for $\IP_1$-a.e.\ $\omega_1\in\Omega_1$. 

Given a real-valued measurable function $r(\omega_1)$, we define 
$$
\Gamma'=\{(\omega_1,\omega_2)\in\Gamma:
F(\omega_2)\le r(\omega_1)\}\subset\Gamma. 
$$
Then $\Gamma'(\omega_1)=\Gamma(\omega_1)\cap F^{-1}((-\infty,r(\omega_1)])$, so that
\begin{equation} \label{5.24}
\E(I_{\Gamma'}|\,\FF_1)=\IP_2(\Gamma'(\omega_1))
=\int_{-\infty}^{r(\omega_1)}
P\bigl(s,\Gamma(\omega_1)\bigr)\rho(s)\,\dd s,
\end{equation}
where $P(s,\cdot)$ stands for the disintegration of~$\IP_2$ with respect to~$F_*(\IP_2)$, and~$\rho$ is the density of~$F_*(\IP_2)$ with respect to the Lebesgue measure. Consider the measurable function 
$$
G(\omega_1, t) = \int_{-\infty}^t P(s, \Gamma(\omega_1)) \rho(s)ds, \quad t\in\R.
$$ 
It is continuous in~$t$, vanishes when $t=-\infty$ and is $\ge p$ when $t=+\infty$.  Consider the set $\{t\in \R:G(\omega_1,t)\le p\}$. This is a measurable set which is the sub-graph of certain measurable function $t=\lambda(\omega_1)$.  Choosing $r(\omega_1) = \lambda(\omega_1)$, we see that the right-hand side of~\eqref{5.24} is identically equal to~$p$, and $\Gamma'$ is a pre-image of the above-mentioned sub-graph under  the measurable mapping $(\omega_1, \omega_2)\mapsto (\omega_1, F(\omega_2))$. We conclude that~$\Gamma'$ is measurable, which completes the proof. 
\end{proof}

\section{Proof of the main theorem}
\label{s3} 
\subsection{General scheme}
\label{s3.1}
We wish to apply a sufficient condition for mixing from~\cite[Section~3.1.2]{KS-book}, stated below as Theorem~\ref{t5.1}. To this end, we need to check the recurrence and stability conditions. The recurrence is a consequence of Hypothesis~\hyperlink{H1}{(H$_2$)}. Indeed, inequality~\eqref{1.2} implies that $P_m(u,B_X(\hat u,r))>0$ for any $u\in X$ and some integer $m=m_r\ge1$. Since the function $u\mapsto P_m(u,\dot B_X(\hat u,r))$ is lower semicontinuous and positive, it is separated from zero on the compact set~$X$, so that~\eqref{5.1} holds. We thus need to prove the stability.   We shall always assume that the hypotheses of Theorem~\ref{t1.1} are satisfied. Recall that, given $\delta>0$, we write $D_\delta=\{(u,u')\in X\times H:\|u-u'\|\le\delta\}$. The following result provides a sufficient condition for the validity of~\eqref{5.2}. 

\begin{theorem} \label{T:1.2}
Suppose there is  a measurable mapping $\varPsi:X\times H\times E\to E$, taking $(u,u',\eta)$ to~$\varPsi^{u,u'}(\eta)$, and positive numbers  $\alpha$, $\beta$, and $q\in(0,1)$ such that $\varPsi^{u,u}(\eta)=\eta$ for any $u\in X$ and $\eta\in E$, and the following properties hold.
\begin{description}
\item[\hypertarget{(A)}{(A)} Stabilisation.]
For any $u,u'\in H$, let $(u_k,v_k)$ be defined by 
\begin{align}
(u_0,v_0)&=(u,u'), \label{1.8}\\
(u_k,v_k)&=(S(u_{k-1},\eta_k),S(v_{k-1},\varPsi^{u_{k-1},v_{k-1}}(\eta_k)).
\label{1.9}
\end{align}
Let us introduce the stopping time 
\begin{equation} \label{tau}
\tau=\min\bigl\{k\ge1:\|u_k-v_k\|> q^{k}\|u-u'\|^\alpha\bigr\}
\end{equation}
and, for any $\delta>0$, define the quantity 
$$
p(\delta)=\inf_{(u,u')\in D_\delta}\IP\{\tau=+\infty\}. 
$$
Then  
\begin{equation} \label{1.10}
\lim_{\delta\to0} p(\delta)=1.
\end{equation}
\item[\hypertarget{(B)}{(B)} Transformation of measure.] 
For any $(u,u')\in X\times H$, we have   
\begin{equation} \label{1.12}
\|\ell-\varPsi_*^{u,u'}(\ell)\|_{\mathrm{var}}\le C \|u-u'\|^\beta.
\end{equation}
\end{description}
Then condition~\eqref{5.2} is valid: 
\begin{equation} \label{107}
\lim_{\delta\to0^+}\sup_{(u,u')\in D_\delta}\sup_{k\ge0}
\|P_k(u,\cdot)-P_k(u',\cdot)\|_L^*=0. 
\end{equation} 
	\end{theorem}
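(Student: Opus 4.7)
The strategy is a coupling argument: on a (possibly enlarged) probability space I construct two processes $(u_k)$ and $(\bar v_k)$ with marginal laws $P_k(u,\cdot)$ and $P_k(u',\cdot)$ respectively, that agree with high probability uniformly in $k$. For every $f$ with $\|f\|_L\le1$ one then has $|f(u_k)-f(\bar v_k)|\le 2\wedge\|u_k-\bar v_k\|$, so that $\|P_k(u,\cdot)-P_k(u',\cdot)\|_L^*\le \E(2\wedge\|u_k-\bar v_k\|)$, and it suffices to estimate the right-hand side uniformly in $k$.

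Write $\delta:=\|u-u'\|$. Realise the pair $(u_k,v_k)$ of~\eqref{1.8}--\eqref{1.9} on the canonical space of the i.i.d.\ noise $(\eta_k)$. Then $(u_k)$ already has the right marginal, but $(v_k)$ does not, since the conditional law given $\FF_{k-1}$ of its actual driving noise $\tilde\eta_k:=\varPsi^{u_{k-1},v_{k-1}}(\eta_k)$ is $\varPsi^{u_{k-1},v_{k-1}}_*(\ell)$, which by~\hyperlink{(B)}{(B)} lies within $C\|u_{k-1}-v_{k-1}\|^\beta$ of $\ell$ in total variation. I then enlarge the probability space by an auxiliary random source and perform a step-by-step maximal coupling to build $\bar\eta_k$ such that (i)~conditionally on the past, $\bar\eta_k$ has law $\ell$; (ii)~the conditional probability of $\{\bar\eta_k\ne\tilde\eta_k\}$ is at most $C\|u_{k-1}-v_{k-1}\|^\beta$; and (iii)~the $\bar\eta_k$ are unconditionally i.i.d.\ with law $\ell$. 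The measurability of this construction relies on Lemma~\ref{l5.3}, applied to a scalar coordinate of the noise whose Lipschitz density is provided by~\hyperlink{H4}{(H$_4$)}. Setting $\bar v_0:=u'$ and $\bar v_k:=S(\bar v_{k-1},\bar\eta_k)$ then yields a process with marginal law $P_k(u',\cdot)$, coinciding with $v_k$ on the event $A_k:=\bigcap_{j=1}^k\{\bar\eta_j=\tilde\eta_j\}$.

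It remains to control $\E(2\wedge\|u_k-\bar v_k\|)$. Decompose the sample space into $A_k\cap\{\tau=\infty\}$, $\{\tau<\infty\}$, and $A_k^c\cap\{\tau=\infty\}$. On the first set, the stopping-time condition in~\hyperlink{(A)}{(A)} gives $\|u_k-\bar v_k\|=\|u_k-v_k\|\le\delta\vee(q^k\delta^\alpha)$, hence the contribution is at most $\delta+\delta^\alpha$ uniformly in $k$. The second set has probability $\le 1-p(\delta)\to0$ by~\hyperlink{(A)}{(A)}. On the third, since $\{\tau=\infty\}\subset\{\tau>j-1\}$ for all $j\ge1$, we have $\|u_{j-1}-v_{j-1}\|\le\delta\vee(q^{j-1}\delta^\alpha)$ there; taking conditional expectations, using property~(ii), and summing over $j$ gives
\[
\IP(A_k^c\cap\{\tau=\infty\})\le C\delta^\beta+\frac{C\delta^{\alpha\beta}}{1-q^\beta}.
\]
Collecting the three contributions,
\[
\sup_{k\ge0}\|P_k(u,\cdot)-P_k(u',\cdot)\|_L^*\le \delta+\delta^\alpha+2(1-p(\delta))+2C\delta^\beta+\frac{2C\delta^{\alpha\beta}}{1-q^\beta},
\]
which vanishes as $\delta\to0^+$, yielding~\eqref{107}.

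The main obstacle is the construction in the second paragraph: one must produce the step-by-step maximal coupling $(\bar\eta_k)$ on the augmented filtration so that both its joint law is the product $\ell^{\otimes\N}$ and the agreement event has exactly the prescribed conditional probability at every step. This is precisely the role of Lemma~\ref{l5.3}: the continuous one-dimensional projection furnished by the Lipschitz densities in~\hyperlink{H4}{(H$_4$)} permits one to carve from any event of conditional probability $\ge p^*$ a measurable sub-event of conditional probability exactly $p^*$, which is the missing ingredient for a measurable maximal coupling adapted to the filtration generated by $(\eta_j)_{j\le k}$. Once this construction is in place, the remainder is standard triangle-inequality bookkeeping as above.
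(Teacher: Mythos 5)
Your argument is correct in outline and arrives at the right bound, but it realises the comparison of $\DD(v_k)$ with $P_k(u',\cdot)$ by a different device than the paper. The paper never constructs a coupling of the noises: it works on the single canonical space $\OOmega=E^{\N}$, defines a pointwise transformation $\Theta(\oomega)=\bigl(\theta_k(u_{k-1},v_{k-1},\omega_k)\bigr)_{k\ge1}$ (with $\theta_k=\varPsi$ cut off outside the region $\|u_{k-1}-v_{k-1}\|\le q^{k-1}\|u-u'\|^\alpha$), observes that $v_k(\oomega)=u_k'(\Theta(\oomega))$ on $\{\tau=\infty\}$, and then invokes Lemma~\ref{l5.4} to reduce everything to the estimate $\|\IP-\Theta_*(\IP)\|_{\mathrm{var}}\le C_1\|u-u'\|^{\alpha\beta}$, proved by a telescoping sum over coordinates using~\eqref{1.12} --- the same geometric series that appears in your third term. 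You instead build an adapted, step-by-step (near-)maximal coupling $(\tilde\eta_k,\bar\eta_k)$ on an enlarged space; that route works and is the classical approximate-coupling argument, but the one place where your sketch is off is the claim that Lemma~\ref{l5.3} supplies the needed construction. That lemma only carves a sub-event of prescribed conditional probability out of a given event, whereas a maximal coupling additionally requires the agreement event to carry the common part $\ell\wedge\varPsi_*^{u_{k-1},v_{k-1}}(\ell)$ and the complement to be redistributed according to the normalised excess, i.e.\ measurable (in $(u_{k-1},v_{k-1})$, hence in $\omega$) versions of the relevant Radon--Nikodym densities together with an independent randomisation. This is standard (measurable maximal couplings of random probability measures exist; see e.g.\ the coupling material in~\cite{KS-book}), but it is a genuine piece of machinery, and it is exactly what the map $\Theta$ and Lemma~\ref{l5.4} are designed to bypass; in the paper, Lemma~\ref{l5.3} serves an entirely different purpose, in Section~\ref{s3.4}, where it extracts from the events $\Gamma_k$ sub-events of conditional probability exactly $3/4$ so that Corollary~\ref{c2.2} applies. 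Modulo substituting the correct coupling lemma for Lemma~\ref{l5.3}, your bookkeeping (the three-way decomposition, the summation of $C\|u_{j-1}-v_{j-1}\|^\beta$ over $j$ on $\{\tau>j-1\}$, and the resulting $O(\delta^\beta+\delta^{\alpha\beta})$ bound) is sound and matches the paper's.
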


Theorem~\ref{T:1.2} is established in Section~\ref{s3.2}. Note that if the constant~$C$ in the right-hand side of inequality~\eqref{1.12} vanishes, then the random variables~$\eta_k$ and $\eta_k':=\varPsi^{u_{k-1},v_{k-1}}(\eta_k)$ form a coupling for the pair of measures~$(\ell,\ell)$, so that $\DD(v_k)=\DD(u_k')$, where $\{u_k',k\ge0\}$ solves~\eqref{1.1} with $u_0'=u'$. In this case, we deal with the classical coupling approach to compare $P_k(u,\cdot)$ and~$P_k(u',\cdot)$. Our proof of Theorem~\ref{t1.1} crucially uses the above result with $\|u-u'\|\ll1$. The right-hand side of~\eqref{1.12} is not zero in this situation, but it is small, so we deal with a kind of approximate coupling. 

To prove Theorem~\ref{t1.1} given Theorem~\ref{T:1.2}, it suffices to construct a measurable mapping~$\varPsi$ satisfying Conditions~\hyperlink{(A)}{(A)} and~\hyperlink{(B)}{(B)}. This will be done with the help of Proposition~\ref{P:2.5}. 

\subsection{Proof of Theorem~\ref{T:1.2}}
\label{s3.2}
Let us define a probability space $(\Omega,\FF,\IP)$ by the relations
$$
\OOmega=\{\oomega=(\omega_k)_{k\ge1}:\omega_k\in E\}, \quad 
\FF=\BB(\OOmega),\quad \IP=\bigotimes_{k=1}^\infty\ell,
$$
where $\OOmega$ is endowed with the  Tikhonov topology. 
Let $(u_k(\oomega), v_k(\oomega))$  be the trajectory of~\eqref{1.8}, \eqref{1.9} with  $\eta_k\equiv\omega_k$ and let~$u_k'(\oomega)$ be the trajectory of~\eqref{1.1} with~$u_0=u'$ and $\eta_k\equiv\omega_k$. Given $u,u',y,z\in H$, we define mappings $\theta_k:E\to E$, $k\ge1$ by
\begin{equation} \label{3.08}
\theta_k(y,z,\omega)
=\left\{
\begin{array}{cl}
\varPsi^{y,z}(\omega)
&\mbox{if} \quad \|y-z\|
\le q^{k-1}\|u-u'\|^\alpha,\\[3pt]
\omega &\mbox{if}\quad \|y-z\|
> q^{k-1}\|u-u'\|^\alpha,
\end{array}
\right.
\end{equation} 
where $\Psi$ is constructed in Proposition~\ref{P:2.5}, and consider the mapping 
$$
\Theta:\OOmega\to\OOmega,\quad 
\Theta(\oomega)
=\bigl(\theta_k(u_{k-1}(\oomega),v_{k-1}(\oomega),\omega_k)\bigr)_{k\ge1}.
$$ 
Clearly, $\{u_k(\oomega)\}_{k\ge0}$ is a trajectory of~\eqref{1.1} with $u_0=u$, and 
\begin{equation} \label{3.09}
v_k(\oomega)=u_k'(\Theta(\oomega))\quad
\mbox{for $k\ge1$, $\oomega\in\{\tau=+\infty\}$}.
\end{equation}
We now write 
\begin{equation} \label{3.10}
\|P_k(u,\cdot)-P_k(u',\cdot)\|_L^*
\le \|P_k(u,\cdot)-\DD(v_k)\|_L^*+\|\DD(v_k)-P_k(u',\cdot)\|_L^*
\end{equation}
and estimate the two terms on the right-hand side. For $(u,u')\in D_\delta$, we have
\begin{align}
\|P_k(u,\cdot)-\DD(v_k)\|_L^*
&=\sup_{\|F\|_L\le1}\bigl|\E(F(u_k)-F(v_k))\bigr|\notag\\
&\le2\IP\{\tau<\infty\}+\E\bigl(I_{\{\tau=\infty\}}\|u_k-v_k\|\bigr)\notag\\
&\le 2(1-p(\delta))+\delta^\alpha q^{k}.
\label{3.011}
\end{align}
To estimate the second term on the right-hand side of~\eqref{3.10}, we use the following simple result, in which $G=\{\tau=+\infty\}$ (e.g., see Section~7.2 in~\cite{KNS-2018} for a proof). 

\begin{lemma} \label{l5.4}
Let $(\Omega,\FF,\IP)$ be a probability space, let~$X$ be a Polish space, and let~$U,V:\Omega\to X$ two random variables. Suppose there is a measurable mapping  $\Theta:\Omega\to\Omega$ such that 
\begin{equation} \label{5.25}
U(\Theta(\omega))=V(\omega)\quad\mbox{for $\omega\in G$},
\end{equation}
where $G\in\FF$. Then
\begin{equation} \label{5.26}
\|\DD(U)-\DD(V)\|_{\mathrm{var}}
\le 2\,\IP(G^c)+\|\IP-\Theta_*(\IP)\|_{\mathrm{var}}. 
\end{equation}
\end{lemma}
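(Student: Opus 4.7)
The plan is to control $|\IP(U\in B)-\IP(V\in B)|$ for an arbitrary Borel set $B\subset X$ by inserting the intermediate quantity $\IP(U\circ\Theta\in B)$ and using a triangle inequality, then taking the supremum over $B$ via the first (set-based) expression in the definition~\eqref{TVN} of total variation.

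First, I would observe that, by the very definition of the pushforward,
$$
\IP(U\circ\Theta\in B)=\IP\bigl(\Theta^{-1}(U^{-1}(B))\bigr)=\Theta_*(\IP)\bigl(U^{-1}(B)\bigr),
$$
so the difference between $\IP(U\in B)=\IP(U^{-1}(B))$ and $\IP(U\circ\Theta\in B)$ is exactly the difference of the two measures $\IP$ and $\Theta_*(\IP)$ evaluated on the single set $U^{-1}(B)\in\FF$. This is bounded by $\|\IP-\Theta_*(\IP)\|_{\mathrm{var}}$ and accounts for the second summand in the target inequality.

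Second, I would exploit the hypothesis $U(\Theta(\omega))=V(\omega)$ for $\omega\in G$ to write
$$
\IP(U\circ\Theta\in B)=\IP(\{U\circ\Theta\in B\}\cap G)+\IP(\{U\circ\Theta\in B\}\cap G^c),
$$
and similarly for $V$; the $G$-parts coincide thanks to the pointwise identity on $G$, and each of the two remainders on $G^c$ is bounded by $\IP(G^c)$, giving $|\IP(U\circ\Theta\in B)-\IP(V\in B)|\le 2\,\IP(G^c)$. Combining the two estimates by the triangle inequality, then taking the supremum over $B\in\BB(X)$, yields~\eqref{5.26}.

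There is no real obstacle: the entire argument is a one-line triangle inequality once the pushforward identity $\IP(U\circ\Theta\in B)=\Theta_*(\IP)(U^{-1}(B))$ is noted and the set $G$ is used to split the probabilities. The only bookkeeping point worth stating carefully is that the bound $|\mu_1(A)-\mu_2(A)|\le\|\mu_1-\mu_2\|_{\mathrm{var}}$ holds for every $A\in\FF$ and is applied here with $\mu_1=\IP$, $\mu_2=\Theta_*(\IP)$, and $A=U^{-1}(B)$.
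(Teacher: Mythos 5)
Your proof is correct. The paper itself does not prove Lemma~\ref{l5.4} but refers to Section~7.2 of \cite{KNS-2018}, and your argument---inserting $\IP(U\circ\Theta\in B)$, identifying it with $\Theta_*(\IP)(U^{-1}(B))$, and splitting along $G$ where $\{U\circ\Theta\in B\}\cap G=\{V\in B\}\cap G$---is exactly the standard route (in fact your second step even yields the slightly sharper bound $\IP(G^c)$ in place of $2\,\IP(G^c)$, which of course still implies~\eqref{5.26}).
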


In view of~\eqref{3.09} and~\eqref{5.26}, we have  
\begin{align}
\|\DD(v_k)-P_k(u',\cdot)\|_L^*
&\le 2\,\|\DD(v_k)-P_k(u',\cdot)\|_{\mathrm{var}}\notag\\
&\le 4\,\IP\{\tau<\infty\}+2\|\IP-\Theta_*(\IP)\|_{\mathrm{var}}.
\label{3.012}
\end{align}
The first term on the right-hand side does not exceed $4(1-p(\delta))$. Substituting~\eqref{3.012} and~\eqref{3.011} in~\eqref{3.10}, we see that~\eqref{107} will be established if we show~that 
\begin{equation} \label{1.15}
\sup_{(u,u')\in D_\delta}
\|\IP-\Theta_*(\IP)\|_{\mathrm{var}}\to0\quad\mbox{as $\delta\to0$}. 
\end{equation}

To prove this, we use the second relation in~\eqref{TVN} to calculate the total variation distance between two measures $\mu_1,\mu_2\in\PP(\OOmega)$.  Obviously, it suffices to consider the functions~$F$ belonging to a dense subset of $C(\OOmega)$ and satisfying the inequality $\|F\|_\infty\le 1$. Hence, the supremum can be taken over all functions depending on finitely many coordinates. 

We thus fix any integer $m\ge1$ and consider an arbitrary continuous  function $F:\OOmega\to\R$ of the form $F(\oomega)=F(\omega_1,\dots,\omega_m)$   with~$\|F\|_\infty\le 1$. Then
\begin{align}
\langle F,\IP-\Theta_*(\IP)\rangle
&=\E\left\{F(\omega_1,\dots,\omega_m)
-F(\theta_1(u,u',\omega_1),\dots,\theta_m(u_{m-1},v_{m-1},\omega_m)\right\}
\notag\\
&=\sum_{k=1}^m \E F_k(u,u',\omega_1,\dots,\omega_m), 
\label{1.16}
\end{align}
where we set
\begin{align*}
F_k(u,u',\omega_1,\dots,\omega_m)
&=F(\theta_1(u,u',\omega_1),\dots,\theta_{k-1}(u_{k-2},v_{k-2},\omega_{k-1}),\omega_k,\dots,\omega_m)\\
&-F(\theta_{1}(u,u',\omega_1),\dots,\theta_{k}(u_{k-1},v_{k-1},\omega_{k}),\omega_{k+1},\dots,\omega_m). 
\end{align*}
Let~$\FF_k\subset\FF$ be the $\sigma$-algebra generated by the first~$k$ coordinates. Setting
$$
\Delta_k=F(x_1,\dots,x_{k-1},\omega_k,\dots,\omega_m)
-F(x_1,\dots,x_{k-1},\theta_{k}(y,z,\omega_{k}),\omega_{k+1},\dots,\omega_m),
$$
we note that
$$
|\E\,\Delta_k|\le \|\ell-\theta_{k*}(y,z,\ell)\|_{\mathrm{var}}
\le I_{[0,q^{k-1}\|u-u'\|^\alpha]}(\|y-z\|)\|\ell-\varPsi_*^{y,z}(\ell)\|_{\mathrm{var}},
$$
where we used~\eqref{3.08}. Combining this with~\eqref{1.12}, we derive
\begin{align*}
\bigl|\E\bigl(F_k(u,u')\,|\,\FF_{k-1}\bigr)\bigr|=|\E\,\Delta_k|
\le Cq^{\beta(k-1)}\|u-u'\|^{\alpha\beta},
\end{align*}
where one takes $x_j =\theta_j(u_{j-1},v_{j-1},\omega_j)$, $y=u_{k-1}$, and $z=v_{k-1}$ in the middle term after calculating the mean value. 
Substituting this into~\eqref{1.16}, we obtain
$$
|\langle F,\IP\rangle-\langle F,\Theta_*(\IP)\rangle|
\le \E\sum_{k=1}^m \bigl|\E\bigl(F_k(u,u')\,|\,\FF_{k-1}\bigr)\bigr|
\le C_1\|u-u'\|^{\alpha\beta}.
$$
Taking the supremum over~$F$ with $\|F\|_\infty\le1$, we see that~\eqref{1.15} holds.

\subsection{Completion of the proof}
\label{s3.4}
We need to prove that Property~\hyperlink{(A)}{(A)} of Theorem~\ref{T:1.2} is satisfied for the Markov process~\eqref{1.1} and the mapping~$\varPsi$ constructed in Proposition~\ref{P:2.5} with an appropriate choice of~$\theta$. To this end, we fix $R>0$ so large that $X\subset B_H(R-1)$ and~$\KK\subset B_E(R)$, and choose~$\theta<1$ such that 
\begin{equation} \label{3.26}
\|S(u,\eta)-S(u',\eta)\|\le \theta^{-1}\|u-u'\|\quad
\mbox{for $u,u'\in B_H(R)$, $\eta\in B_E(R)$}. 
\end{equation}
Let us denote by $\delta>0$ and $\varPsi:X\times H\times E\to E$ the number and mapping constructed in Proposition~\ref{P:2.5}. Given $(u,u')\in X\times H$, let~$(u_k,v_k)$ be the random sequence given by~\eqref{1.8}, \eqref{1.9}. Without loss of generality, we assume that the underlying probability space~$(\Omega,\FF,\IP)$ coincides with the tensor product of countably many copies of~$(E,\BB(E),\ell)$ and denote by~$\{\FF_k\}_{k\ge1}$ the corresponding filtration. For any $(u,u')\in D_\delta$, let $N=N(u,u')\ge0$ be the smallest integer such that 
$$
\theta^{-N}\|u-u'\|\ge\delta.
$$
We define the sets~$\XXX_n$, $n\ge -N$ by the relation
\begin{equation} \label{3.27}
\XXX_n=\{(v,v')\in D_\delta:\theta^{n+1}\|u-u'\|<\|v-v'\|\le\theta^{n}\|u-u'\|\}.
\end{equation}
It is clear that the union of the sets $\cup_{n\ge -N}\XXX_n$ and the diagonal $\{(v,v):v\in X\}$ coincides with~$D_\delta$. Given $(u,u')\in D_\delta$, let us  consider a random sequence~$\{\xi_k\}_{k\ge0}$ given by\,\footnote{To simplify the notation, we do not indicate the dependence on $(u,u')$ for~$\XXX_n$ and~$\xi_k$ (as well as for the events~$\Gamma_k,\Gamma_k'$ and random variables $w_k, \zeta_k$ defined below). } 
$$
\xi_k=
\left\{
\begin{array}{cl}
+\infty&\mbox{if $u_k=v_k$},\\
n&\mbox{if $(u_k,v_k)\in\XXX_n$}, \\
-N-1 & \mbox{if $(u_k,v_k)\notin D_\delta$}. 
\end{array}
\right.
$$
In particular, we have $\xi_0=0$, and if $\xi_m=+\infty$ for some integer $m\ge1$, then $\xi_k=+\infty$ for $k\ge m$ (since $\varPsi^{u,u}(\eta)=\eta$ for any $u\in X$ and $\eta\in E$). Suppose we have proved that
 \begin{equation} \label{3.28}
\IP\{\xi_k\ge -l+ck\mbox{ for all $k\ge1$}\}\ge p_l 
\quad\mbox{for $\|u-u'\|\le \delta\theta^{2l}$},
\end{equation}
where  the sequence $\{p_l\}$ and the number $c>0$ do not depend on~$(u,u')$, and $p_l\to 1$ and $l\to \ty$. Then, in view of~\eqref{3.27}, on the set $\{\xi_k\ge -l+ck\}$, we have 
$$
\|u_k-v_k\|\le \theta^{-l+ck}\|u-u'\|
\le \delta^{1/2}\theta^{ck+l}\|u-u'\|^{1/2}\le  \theta^{ck}\|u-u'\|^{1/2},
$$
since we can assume that  $\delta<1$.
It follows that if we take  $q=\theta^c$  and~$\alpha=\frac12$, then the random time~$\tau$ defined by~\eqref{tau} will satisfy the inequality $\IP\{\tau=+\infty\}\ge p_l$. We thus obtain~\eqref{1.10}. Hence, it remains to prove~\eqref{3.28}. To this end, we shall use Corollary~\ref{c2.2}. 

\smallskip
If $\|u-u'\|\le \delta\theta^{2l}$ and $(u_{k-1}, v_{k-1})\in X_{n}$ for some integer $n\ge -2l$, then $\|u_{k-1}-v_{k-1}\|\le\delta$.  So inequality~\eqref{3.9} applies, and combining it with~\eqref{3.7} and~\eqref{3.26}, we see that
\begin{align}
\IP\{\xi_k-\xi_{k-1}\ge 1\,|\,\FF_{k-1}\}&\ge \frac34\quad
\mbox{on the set $\{\xi_{k-1}\ge -2l$\}},\label{3.29}\\
\IP\{\xi_k-\xi_{k-1}\ge -1\,|\,\FF_{k-1}\}&=1\quad
\mbox{almost surely},\label{3.30}
\end{align}
where $k\ge1$ is an arbitrary integer. Let us consider the event   
$$
\Gamma_k:=\{\xi_k-\xi_{k-1}\ge 1 \}.
$$
It follows from~\eqref{3.29} and~\eqref{3.30} that, with probability~$1$,
\begin{align*}
\E\{I_{\Gamma_k}|\,\FF_{k-1}\}&=\E\left\{I_{\Gamma_k} \left(I_{\{\xi_{k-1}\ge -2l\}} 
+I_{\{\xi_{k-1}< -2l\}}\right)|\,\FF_{k-1}\right\}\\
&\ge I_{\{\xi_{k-1}\ge -2l\}}\IP\{\xi_k-\xi_{k-1}\ge 1\,|\,\FF_{k-1}\}
\\&\quad +I_{\{\xi_{k-1}< -2l\}}\IP\{\xi_k-\xi_{k-1}\ge -1\,|\,\FF_{k-1}\}\ge \frac34.
\end{align*}
It is easy to see that the conditions of Lemma~\ref{l5.3} are satisfied with the following choice of the probability spaces and the function~$F$: the space $(\Omega_1,\FF_1,\IP_1)$ is the tensor product of $k-1$ copies of $(E,\BB(E),\ell)$, $(\Omega_2,\FF_2,\IP_2)$ coincides with $(E,\BB(E),\ell)$, and  $F:E\to\R$ is the orthogonal projection to the vector space of~$e_1$; see Hypothesis~\hyperlink{H4}{(H$_4$)}. Hence, there is a subset $\Gamma_k'\subset\Gamma_k$ such that $\E\{I_{\Gamma_k'}|\,\FF_{k-1}\}=\frac34$ almost surely. Define a random variable~$w_k$ by 
$$
w_k=\left\{
\begin{array}{cl}
1&\mbox{for $\omega\in\Gamma_k'$},\\[3pt]
-1&\mbox{for $\omega\in\Omega\setminus\Gamma_k'$}. 
\end{array}
\right. 
$$
The construction implies that~$w_k$ satisfies~\eqref{2.1}. Let us set $\zeta_k=w_1+\cdots+w_k$ and apply Corollary~\ref{c2.2} to find  a number $c>0$  and a sequence~$\{p_l\}$ converging to~$1$ as $l\to\infty$  such that 
\begin{equation} \label{3.31}
 \IP\{\zeta_k\ge -l+ck\mbox{ for all $k\ge1$}\} \ge p_l.
\end{equation}
  Now note that,   on the event in~\eqref{3.31}, we have $\xi_k\ge\zeta_k\ge -l+ck$, whence we conclude that~\eqref{3.28} is valid. This completes the proof of Theorem~\ref{t1.1}.

\section{Application}
\label{s4}

In this section, we apply Theorem~\ref{t1.1} to a parabolic PDE with a degenerate random perturbation. Namely, we consider Eq.~\eqref{rf-pde} in which $f:\R\to\R$ is  a polynomial of an odd degree $p\ge3$ with positive leading coefficient: 
\begin{equation} \label{nonlinear-term}
f(u)=\sum_{n=0}^p c_n u^n,	
\end{equation}
where $c_p>0$, and  $c_0, c_1, \ldots, c_{p-1}\in \R$ are arbitrary. In this case, it is easy to see that~$f$ satisfies the inequalities 
\begin{align}
	-C\le f'(u)&\le C(1+|u|)^{p-1},\label{E:4.1}\\
	f(u)u&\ge c\,|u|^{p+1}-C,\label{E:4.2}
\end{align}
where $u\in\R$ is arbitrary, and~$C,c>0$ are some constants. We shall confine ourselves to the case $p=5$ and $d=3$,  although all the results below remain true (with simple adaptations) in the case  
 \begin{equation}\label{power-dimension}
 \begin{cases}p\ge3 &   \text{ for }  d=1,2,\\ 3\le p\le\frac{d+2}{d-2} & \text{ for } d=3,4. \end{cases}	
 \end{equation}
 We    assume that $h\in H^1(\T^3)$ is a fixed function and~$\eta$ is a random process  of the form
\begin{equation} \label{E:4.3}
\eta(t,x)=\sum_{k=1}^\infty \I_{[k-1,k)}(t)\eta_k(t-k+1,x),
\end{equation}
where $\I_{[k-1,k)}$ is the indicator function of the interval $[k-1,k)$, and~$\eta_k$ are   i.i.d. random variables in $L^2(J,H)$ with $J:=[0,1]$ and $H:=L^2(\T^3)$.

Let us recall some definitions that were used in~\cite{KNS-2018} in the context of the Navier--Stokes system and complex Ginzburg--Landau equations. Given a finite-dimensional subspace~$\HH\subset H^2:=H^2(\T^d)$, we define by recurrence a non-decreasing sequence subspaces $\HH_k\subset H^2$ as follows: 
\begin{equation} \label{Hk}
\HH_0:=\HH, \quad \HH_{k+1}:=\text{span}\{\eta,\,\zeta\xi: \,\,\eta,\zeta\in\HH_k,\,\xi\in\HH\},\quad k\ge0.	
\end{equation}

\begin{definition}\label{D:4.1}
	A subspace $\HH\subset H^2$ is said to be   {\it saturating\/} if the union of~$\{\HH_k\}_{k\ge0}$ is dense in~$H$.
\end{definition}

 Examples of saturating spaces are provided by Proposition~\ref{P:5.2}.  Note that the saturation property does not depend on the number $\nu>0$ or on the polynomial~$f$. Let us denote by $(\cdot,\cdot)$  the scalar product in $H$.
 
 \begin{definition} \label{D:4.2}
	 A function $\zeta\in L^2(J,\HH)$  is said to be {\it observable\/}  if for any Lipschitz-continuous functions~$a_i:J\to\R$, $i\in\II$ and any continuous function $b:J\to\R$ the equality\,\footnote{It is easy to see that the observability of a function does not depend on the particular choice of the basis $\{\varphi_i\}$ in~$\HH$; see Remark~1.4 in~\cite{KNS-2018}.} 
$$
		\sum_{i\in\II}a_i(t)(\zeta(t),\varphi_i)-b(t)=0\quad\mbox{in $L^2(J)$}
$$
	implies that $a_i$, $i\in\II$ and~$b$ vanish identically. A probability measure~$\ell$ on~$L^2(J,\HH)$ is said to be    {\it observable\/} if $\ell$-almost every trajectory in $L^2(J,\HH)$ is observable.
\end{definition}

We now formulate the hypotheses imposed on the random process~\eqref{E:4.3}. We assume that it takes values in a finite-dimensional saturating subspace $\HH\subset H^2$. Let us fix an   orthonormal basis $\{\varphi_i\}_{i\in\II}$ in $\HH$, and denote by~$E_i$ the space~of square-integrable functions on~$J$ with range in~$\lspan(\varphi_i)$, so that $E:=L^2(J,\HH)$ is representable as the orthogonal sum of~$\{E_i\}_{i\in\II}$. We   assume that~$\ell=\DD(\eta_k)$ has a compact support $\KK\subset E$ containing the origin and satisfies the two hypotheses below.

\begin{description}
\item[Decomposability.] 
{\sl The measure~$\ell$ is representable as the tensor product of its projections~$\ell_i$ to~$E_i$. Moreover, the measures~$\ell_i$ are decomposable in the following sense: there is an orthonormal basis in~$E_i$ such that the measure~$\ell_i$ is representable as the tensor product of its projections to the one-dimensional subspaces spanned by the basis vectors. Finally, for any $i\in\II$ the corresponding one-dimensional projections of~$\ell_i$ possess Lipschitz-continuous densities with respect to the Lebesgue measure.} 
\item[Observability.]
{\sl
The measure~$\ell$ is observable.}  
\end{description}
 We refer the reader to Section~5 in~\cite{KNS-2018} for a discussion of  decomposability and observability properties and examples. In particular, it is shown there that both properties are satisfied for the Haar coloured noise given by~\eqref{eta-intro},~\eqref{0.3}.
  
  Let $(u_k, \pP_u)$ be the Markov process obtained by restricting the   solutions   of Eq.~\eqref{rf-pde} to integer times, and let $\PPPP_k$ and $\PPPP_k^*$ be the associated Markov semigroups. The following theorem is the main result of this section.
  
  \begin{theorem} \label{T:4.3}
	In addition to the above assumptions, suppose that the saturating  subspace $\HH$   contains the function identically equal to~$1$, and the dynamics of Eq.~\eqref{rf-pde} satisfies Hypotheses~\hyperlink{S}{\rm(S)} and \hyperlink{C}{\rm(C)} of the Introduction. Then, for any $\nu>0$, the   process~$(u_k,\IP_u)$ has a unique stationary measure~$\mu_\nu\in\PP(H)$,  
	and there is a sequence of positive numbers~$\{\gamma_k\}$ going to zero as $k\to\infty$ such~that
$$\|\PPPP_k^*\lambda-\mu\|_L^*\le \gamma_k\quad\mbox{for all $k\ge0$ and $\lambda\in\PP(H)$}. 
$$
\end{theorem}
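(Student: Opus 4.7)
The plan is to deduce Theorem~\ref{T:4.3} from the abstract result Theorem~\ref{t1.1}. Take $H=L^2(\T^3)$, $E=L^2(J,\HH)$, and let $S(u,\eta)$ be the time-one map of~\eqref{rf-pde}. The dissipativity implied by~\eqref{E:4.2} together with parabolic smoothing produces a ball in $H^2(\T^3)$ that is invariant under $S(\,\cdot\,,\eta)$ for every $\eta\in\KK$; its closure in~$H$ is the compact set~$X$ required by the abstract framework, and one can take $V=H^1(\T^3)$. Hypothesis~\hyperlink{H1}{(H$_1$)} then follows from the polynomial form of $f$ and standard parabolic regularity and analyticity arguments; (H$_4$) is the decomposability assumption once rewritten in the basis provided by the one-dimensional projections of the $\ell_i$; and~(H$_3$) is proved in Section~5 of~\cite{KNS-2018} using the saturation of~$\HH$ together with the observability of~$\ell$.

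The crux is to verify the approximate controllability~\hyperlink{H2}{(H$_2$)} with $\hat u=w_N$. I would proceed in three stages. First, using that $0\in\KK$, the Lyapunov structure~\eqref{E:4.5} combined with the finiteness of equilibria from~\hyperlink{S}{(S)} forces the unperturbed trajectory $k\mapsto S_k(u;0,\dots,0)$ to enter the $(\delta/2)$-neighbourhood of some $w_{j(u)}$ at a finite time $T(u)\in\N$, where $\delta$ is the radius from~\eqref{wNconvergence}. Second, invoking~\hyperlink{C}{(C)} and the continuity of $S_n$ in the initial data, the controls $\zeta_{j1},\dots,\zeta_{jn_j}\in\KK$ that steer $w_j$ to a point within distance~$\delta$ of $w_N$ will do so also from any point sufficiently close to~$w_j$ in~$H$. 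Third, once inside the $\delta$-ball around~$w_N$, the local asymptotic stability~\eqref{wNconvergence} yields a uniform time $T'=T'(\e)$ such that a further $T'$ iterations with zero control bring the trajectory within~$\e$ of~$w_N$.

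The main obstacle is uniformity in $u\in X$, since $T(u)$ and $j(u)$ in the first stage depend on~$u$. I would handle it by a compactness argument: by continuity of the flow, the sets $U(u):=\{v\in X:\|S_{T(u)}(v;0,\dots,0)-w_{j(u)}\|<\delta/2\}$ form an open cover of the compact set~$X$, so a finite subcover reduces the first stage to finitely many ``templates'' of controls. Concatenating these with the controls from the second and third stages and padding all resulting finite sequences to a common length~$m=m(\e)$ with the zero control yields~\eqref{1.2} uniformly in $u\in X$, as~\hyperlink{H2}{(H$_2$)} requires. Theorem~\ref{t1.1} then provides a unique stationary measure $\mu_\nu\in\PP(X)$ and~\eqref{1.6} for $\lambda\in\PP(X)$; the extension to arbitrary $\lambda\in\PP(H)$ is standard and uses the Lyapunov function to show that $\PPPP_k^*\lambda$ is eventually concentrated on the compact absorbing set~$X$, after which the previous estimate applies.
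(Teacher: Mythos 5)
Your overall strategy coincides with the paper's: the same choice of $H$, $E$, $S$, the compact invariant set $X=B_{H^2}(K)$ obtained from dissipativity plus parabolic smoothing, and the same three-stage verification of \hyperlink{H2}{(H$_2$)} with $\hat u=w_N$ (Lyapunov/LaSalle convergence of the unperturbed flow to one of the finitely many equilibria, the controls from \hyperlink{C}{(C)}, the local stability of $w_N$, and a compactness argument for uniformity in $u\in X$). That part is sound, up to the cosmetic adjustment that the radius $\delta/2$ in your first stage should be replaced by the possibly smaller radius around $w_j$ on which the controls $\zeta_{j1},\dots,\zeta_{jn_j}$ still steer into the open $\delta$-ball about $w_N$; continuity of $S_{n_j}$ in the initial state supplies such a radius.

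The genuine gap is the verification of \hyperlink{H3}{(H$_3$)}, which you dispose of by citing \cite{KNS-2018}. That reference treats the Navier--Stokes and Ginzburg--Landau nonlinearities; for Eq.~\eqref{rf-pde} the argument must be rerun for the quintic polynomial~$f$, and this is exactly where the hypothesis that $\HH$ contains the function identically equal to~$1$ --- which appears in the statement of the theorem and which your proof never uses --- enters. Concretely, one shows by induction that any $w_0$ in the kernel of the Gramian of the linearised equation is orthogonal to every $\HH_k$: the identity ${\mathsf P}_\HH R^{\tilde u}(1,t)^*w_0\equiv 0$ is differentiated repeatedly in $t$, the observability of~$\ell$ is invoked at each step to kill the coefficients of the $\eta^i(t)$, and after five differentiations one arrives at $\bigl(f^{(5)}(\zeta,\varphi_i,\varphi_j,\varphi_m,\varphi_n),w_0\bigr)=0$ for $\zeta\in\HH_k$ and basis vectors of~$\HH$. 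Taking $\varphi_j=\varphi_m=\varphi_n=1$ gives $f^{(5)}(\zeta,\varphi_i,1,1,1)=120\,c_5\,\zeta\varphi_i$, hence $w_0\perp\zeta\varphi_i$, i.e.\ $w_0\perp\HH_{k+1}$, and saturation of~$\HH$ concludes. Without an argument of this kind, \hyperlink{H3}{(H$_3$)} is unproved for this equation and the hypothesis $1\in\HH$ is left unexplained.
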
 

Before proving this theorem, let us consider a concrete example of a stochastic force for which the conclusion holds. To this end, we shall use some results described in the Appendix (see Sections~\ref{S:5.2}--\ref{S:5.4}). 

\begin{example} 
Let us denote by $\II\subset\Z^3$ the symmetric set defined in Proposition~\ref{P:5.2} and by~$\HH$ the corresponding $7$-dimensional subspace of trigonometric functions. We consider the process
\begin{equation*}
\eta^a(t,x)=a\sum_{l\in\II}b_l\eta^l(t)e_l(x), 	
\end{equation*}
where $a>0$ is a (large) parameter, $b_l\in\R$ are non-zero numbers, $\{e_l\}_{l\in\II}$ is the basis of~$\HH=\HH(\II)$ defined in Section~\ref{S:5.2}, and $\{\eta^l\}_{l\in\II}$ are independent Haar processes, see~\eqref{0.3}. Let us fix any $\nu>0$ and use Proposition~\ref{P:5.3} to find a subset $\GG_\nu\subset H^1(\T^3)$ of Baire's second category such that Hypothesis~\hyperlink{S}{(S)} is satisfied for any $h\in\GG_\nu$. We fix any~$h\in H^1(\T^3)$ with that property and denote by $w_1,\dots,w_N$ the corresponding set of solutions for~\eqref{stationary-eq}. As was explained in the Introduction, one of these solutions is locally asymptotically stable under the dynamics of the unperturbed equation~\eqref{unperturbed-eq}, and there is no loss of generality in assuming that~$w_N$ possesses that property. Let $\delta>0$ be a number such that the solutions of~\eqref{unperturbed-eq} issued from the $\delta$-neighbourhood of~$w_N$  satisfy~\eqref{wNconvergence}. In view of Theorem~\ref{T:5.5}, for any $i\in[\![1,N-1]\!]$, there is a smooth $\HH$-valued function $\zeta_i$ such that 
\begin{equation} \label{approxmate-control}
\|u(1;w_i,\zeta_i)-w_N\|<\delta,
\end{equation}
where $u(t;v,\eta)$ stands for the solution of~\eqref{rf-pde} corresponding to the initial state~$v\in L^2$ and the external force~$\eta$. Let $\KK^a\subset L^2(J,\HH)$ the support of the law~$\ell^a$ for the restriction of~$\eta^a$ to the interval $J=[0,1]$. Since the Haar functions~$\{h_0,h_{jl}\}$ entering~\eqref{0.3} form a basis in~$L^2(J)$, and the density~$\rho$ of the random variables~$\xi_k,\xi_{jl}$ is positive at zero, choosing~$a>0$ sufficiently large, we can approximate the functions~$\zeta_i$, within any accuracy  in~$L^2(J,\HH)$, by elements of~$\KK^a$. It follows that inequalities~\eqref{approxmate-control} remain valid for some suitable functions $\zeta_i\in\KK^a$, provided that $a\gg1$. Thus, Hypothesis~\hyperlink{C}{(C)} is also fulfilled. Finally, as is explained in Section~5 of~\cite{KNS-2018}, the measure~$\ell^a$ possesses the decomposability and observability properties. Hence, we can find $a_0(\nu,h)>0$ such that the conclusion of Theorem~\ref{T:4.3} is valid for any $\nu>0$, $h\in \GG_\nu$, and $a\ge a_0(\nu,h)$. 
\end{example}

\begin{proof}[Proof of Theorem~\ref{T:4.3}]
Let us denote by $S:H\times E\to H$, $u_0\mapsto u(1)$ the time-$1$ resolving operator for problem~\eqref{rf-pde},~\eqref{rf-pde0}. Due to the superlinear growth of~$f$ and parabolic regularisation property, there is a number $K>0$ such~that 
\begin{equation}\label{E:4.4}
\|S(u,\eta)\|_2\le K \quad \text{for any } u\in H, \eta\in \KK;
\end{equation}
see \cite[Lemma~2.10]{JNPS-cmp2014}. The theorem will be established if we check  Hypotheses~\hyperlink{H1}{(H$_1$)}--\hyperlink{H4}{(H$_4$)} of Theorem~\ref{t1.1} for $H=L^2$, $E=L^2(J,\HH)$, and $X=B_{H^2}(K)$. By construction, $X$~is compact in~$H$, and   inclusion $S(X\times\KK)\subset X$ follows from~\eqref{E:4.4}.  Hypothesis~\hyperlink{H1}{(H$_1$)} on the regularity of~$S$ is well known to hold for   Eq.~\eqref{rf-pde} (e.g., see Section~5 in~\cite[Chapter~1]{BV1992} and~\cite{kuksin-1982}), and  Hypothesis~\hyperlink{H4}{(H$_4$)} is satisfied  in view of the decomposability assumption. The remaining  hypotheses are checked in the following two~steps.
  
\smallskip
{\it Step~1. Checking Hypothesis~\hyperlink{H2}{\rm(H$_2$)}}. By Hypothesis~\hyperlink{S}{\rm(S)}, Eq.~\eqref{stationary-eq} has finitely many stationary states $w_1, \ldots, w_N$. As in the Introduction, $w_N$~is locally asymptotically stable and $\delta>0$ is its stability radius. We claim that Hypothesis~\hyperlink{H2}{(H$_2$)} is valid with $\hat u=w_N$. To see this, we first establish~\eqref{1.2} for $u\in W:=\{w_1,\ldots,w_{N-1}\}$ and an arbitrary~$\e>0$. Let us fix any $i\in[\![1,N-1]\!]$ and use Hypothesis~\hyperlink{C}{(C)} to find an integer $n_i\ge1$ and vectors $\zeta_{i1},\dots,\zeta_{in_i}\in\KK$ such that~\eqref{controlc} holds. Since the solutions of~\eqref{unperturbed-eq} that are issued from the $\delta$-neighbourhood of~$w_N$ converge uniformly to~$w_N$, we can find an integer $m\gg1$ such that~\eqref{1.2} holds for $u=w_i$ and $\hat u=w_N$, provided that $\zeta_j=\zeta_{ij}$ for $1\le j\le n_i$ and $\zeta_j=0$ for $n_i+1\le j\le m$. 

To check~\hyperlink{H2}{(H$_2$)} for arbitrary initial condition $u\in X$, we use the existence of a global Lyapunov function for the unperturbed equation~\eqref{unperturbed-eq}. Namely, let us set
\begin{equation}\label{E:4.5}
\Phi(u)=\int_{\T^3} \left(\frac\nu2|\nabla u|^2+F(u)-hu\right)\!\dd x,
\end{equation}
where $F(u)=\int_0^uf(s)\dd s$. Then, for any solution $u(t)$ of Eq.~\eqref{unperturbed-eq}, we have 
$$
\frac{\dd}{\dd t} \Phi(u(t))=\int_{\T^3} \p_tu\left(\nu \Delta u- f(u)+h \right)\dd x=-\int_{\T^3} \left(\p_tu\right)^2\dd x\le 0. 
$$
Thus, the function $t\mapsto \Phi(u(t))$ is non-increasing, and it is constant on a non-degenerate interval if and only if $u\equiv w_i$  for some $1\le i\le N$. Thus, $\varPhi$ is a global Lyapunov function for~\eqref{unperturbed-eq}. 

We now use a standard approach to prove that the $\omega$-limit set of any solution~$u(t)$ of Eq.~\eqref{unperturbed-eq} coincides with one of the stationary states (e.g., see Section~2 in~\cite[Chapter~3]{BV1992}). A simple compactness argument will then show  that the convergence to the stationary states is uniform with respect to the initial condition $u_0\in X$, and since $0\in \KK$, this will imply the validity of Hypothesis~\hyperlink{H2}{(H$_2$)}. 

To prove the required property, we first note that, for any $u_0\in X$, the trajectory $\{u(t), t\ge0\}$ is contained in the compact set~$X$, so that the corresponding $\omega$-limit set~$\omega(u_0)$ is non-empty. Since~$X$ is compact also in~$H^1$, for any $w\in \omega(u_0)$ we can find a sequence $t_n\to \ty$ such that   $u(t_n)\to w$ in~$H^1$ as $n\to\ty$. By the continuity of $\Phi: H^1\to \R$ and the monotonicity of $\Phi(u(t))$, we have
$$
\Phi(w)=\lim_{n\to \ty}\Phi(u(t_n))= \inf_{t\ge0} \Phi(u(t)). 
$$
On the other hand, the continuity of $S(\cdot,0):H^1\to H^1$ implies that 
$$
\Phi(S(w,0))=\lim_{n\to \ty}\Phi(S(u(t_n),0))=\lim_{n\to \ty}\Phi(u(t_n+1))=\inf_{t\ge0} \Phi(u(t)).
$$
This shows that $\Phi(w)=\Phi(S(w,0))$, so that $w$ is a stationary solution for~\eqref{unperturbed-eq}. Since~$\omega(u_0)$ is a connected subset, it must coincide with one of the stationary solutions. 

\smallskip
{\it Step~2. Checking Hypothesis~\hyperlink{H3}{\rm(H$_3$)}}. The verification of this hypothesis is similar to the cases of the Navier--Stokes system and complex Ginzburg--Landau equations considered in~\cite[Section~4]{KNS-2018}. Let us recall that the nonlinear term~$f:\R\to\R$ has the form~\eqref{nonlinear-term}, in which $p=5$, $c_5>0$, and $c_0,\dots,c_4\in\R$. It defines a smooth mapping in~$H^2$, whose derivative is a multiplication operator given by 
$$
f'(u;v)=f'(u)v=\biggl(\,\sum_{n=1}^5 n c_n u^{n-1}\biggr)v.
$$
We  need to show that the image of the derivative $(D_\eta S)(u,\eta):E\to H$ is dense  for any $u\in X$ and $\ell$-a.e.~$\eta\in E$. Let us fix $u\in X$ and $\eta\in E$, denote by $\tilde u\in L^2(J,H^3)\cap W^{1,2}(J,H^1)$ the solution of~\eqref{rf-pde}, \eqref{rf-pde0}, and consider the linearised problem
	\begin{equation} \label{E:4.6}
		\dot v-\nu \Delta v+f'(\tilde u(t))v=0, \quad v(s)=v_0, 
	\end{equation}
	where   $v_0\in H$. Let   $R^{\tilde u}(t,s):H\to H$ with $0\le s\le t\le 1$ be the resolving operator for this   problem. We define the Gramian  $G^{\tilde u}:H\to H$ by
	\begin{equation} \label{E:4.7}
	G^{\tilde u}:=\int_0^T R^{\tilde u}(T,t){\mathsf P}_\HH R^{\tilde u}(T,t)^*\dd t,
	\end{equation}
	where $R^{\tilde u}(T,t)^*:H\to H$ is the adjoint of $R^{\tilde u}(T,t)$, and~${\mathsf P}_\HH: H\to H$ is  the   projection to~$\HH$. Together with Eq.~\eqref{E:4.6}, let us consider its dual problem, which is a backward parabolic equation: 
	\begin{equation} \label{E:4.9}
	\dot w+\nu \Delta w-f'(\tilde u(t)) w=0, \quad w(1)=w_0.
	\end{equation}
This problem a unique solution $w\in L^1(J,H^1)\cap W^{1,2}(J,H^{-1})$  given by	\begin{equation} \label{E:4.10}
		w(t)=R^{\tilde u}(1,t)^*w_0. 
	\end{equation}	
	In view of Theorem 2.5 in~\cite[Part~IV]{zabczyk2008}, the image of~$(D_\eta S)(u,\eta)$ is dense in~$H$ if and only if
	\begin{equation}\label{E:4.8}
	\Ker(G^{\tilde u})=\{0\}.
		\end{equation}
We claim that this equality holds for any $u\in X$ and~$\ell$-a.e. $\eta\in E$. To prove this, we shall show that all the elements of    $ \Ker(G^{\tilde u})$ are  orthogonal to~$\HH_k$ for any~$k\ge0$. Since~$\cup_{k\ge0}\HH_k$ is   dense in~$H$, this will imply~\eqref{E:4.8}. 

We argue by induction on~$k \ge  0$. Let us take any $w_0\in \Ker(G^{\tilde u})$.  By~\eqref{E:4.7},  
$$
(G^{\tilde u}w_0,w_0)=\int_0^1\|{\mathsf P}_\HH R^{\tilde u}(1,t)^*w_0\|^2\dd t=0.
$$
This implies that   ${\mathsf P}_\HH R^{\tilde u}(1,t)^*w_0\equiv0$, and hence, for any $\zeta\in\HH_0$, we have
\begin{equation} \label{E:4.11}
	(\zeta,R^{\tilde u}(1,t)^*w_0)=0\quad \mbox{for $t\in J$}.
\end{equation}
Taking $t=1$, we see that $w_0$ is orthogonal to~$\HH_0$. Assuming that the function~$w_0$ is orthogonal to~$\HH_k$, let us prove its orthogonality to~$\HH_{k+1}$. We differentiate~\eqref{E:4.11} in time and use~\eqref{E:4.9} and \eqref{E:4.10} to derive 
$$
	\bigl(-\nu\Delta \zeta+f'(\tilde u(t))\zeta,w(t)\bigr)=0\quad\mbox{for $t\in J$}.
$$
Differentiating this equality in time and using~\eqref{E:4.9}, we obtain 
\begin{multline*}
\bigl(-\nu\Delta\zeta+f'(\tilde u)\zeta,\dot w\bigr)-\bigl(f^{(2)}(\tilde u;\zeta,-\nu\Delta\tilde u+f(\tilde u)-h),w\bigr)\\
+\sum_{i\in\II}\bigl(f^{(2)}(\tilde u;\zeta,\varphi_i),w\bigr)\eta^i(t)=0, 
\end{multline*}
where $\eta^i(t)=(\eta(t),\varphi_i)$ and  $f^{(k)}(u;\cdot)$ is  the $k^\text{th}$ derivative of~$f(u)$ (so that   $f^{(k)}=0$ for $k\ge6$). Setting
\begin{align*}
	a_i(t)&=\bigl(f^{(2)}(\tilde u;\zeta,\varphi_i),w\bigr),\\
	b(t)&=	\bigl(-\nu\Delta\zeta+f'(\tilde u)\zeta,\dot w\bigr)-\bigl(f^{(2)}(\tilde u;\zeta,-\nu\Delta\tilde u+f(\tilde u)-h),w(t)\bigr), 
\end{align*} we get  the equality
$$
b(t)+\sum_{i\in\II}a_i(t)\eta^i(t)=0\quad\mbox{for   $t\in J$},
$$ where $a_i$
 are Lipschitz-continuous functions and $b$ is continuous. The observability of~$\ell$ implies that  
\begin{equation*}
	\bigl(f^{(2)}(\tilde u(t);\zeta,\varphi_i),w(t)\bigr)=0
	\quad\mbox{for $i\in\II$, $t\in J$}. 
\end{equation*}
Applying exactly the same argument three more times, we derive   
$$
	\bigl(f^{(5)}(\zeta,\varphi_i,\varphi_j,\varphi_m,\varphi_n),w(t)\bigr)=0
	\quad\mbox{for $i,j,m,n\in\II$, $t\in J$}.
$$
 Taking $t=1$, we see that  $w(1)=w_0$ is orthogonal to the   space~$\VV$ spanned by~$\{(f^{(5)}(\zeta,\varphi_i,\varphi_j,\varphi_m,\varphi_n)\}$. As the space $\HH$   contains the function identically equal to~$1$, we can take $\varphi_j=\varphi_m=\varphi_n=1$, in which case   
$$
f^{(5)}(\zeta,\varphi,1,1,1)=120\,c_5  \zeta\varphi.
$$
The latter implies that~$\VV$ contains all the products $\zeta\xi$ with $\zeta\in\HH_k$ and $\xi\in\HH$. Combining this with the   induction hypothesis, we conclude that~$w_0$ is orthogonal to~$\HH_{k+1}$. This completes the proof of Theorem~\ref{T:4.3}.
\end{proof}

\section{Appendix}\label{s5} 
\subsection{Sufficient conditions for mixing}\label{s5.1}

Let $X$ be a compact metric space and let $(u_k,\IP_u)$ be a discrete-time Markov process 
in~$X$ possessing the Feller property. We denote by $P_k(u,\Gamma)$ the corresponding transition function, and by~$\PPPP_k$ and~$\PPPP_k^*$ the Markov operators. The following theorem is a straightforward consequence of Theorem~3.1.3 in~\cite{KS-book}. 

\begin{theorem} \label{t5.1}
Suppose that the following two conditions are satisfied for some point $\hat u\in X$. 
\begin{description}
\item[Recurrence.]
For any $r>0$, there is an integer $m\ge1$ and a number $p>0$ such that
\begin{equation} \label{5.1}
P_m\bigl(u,B_X(\hat u,r)\bigr)\ge p\quad\mbox{for any $u\in X$}.
\end{equation} 
\item[Stability.] 
There is a positive function $\delta(\e)$ going to zero as $\e\to0^+$ such that
\begin{equation} \label{5.2}
\sup_{k\ge0}\|P_k(u,\cdot)-P_k(u',\cdot)\|_L^*\le\delta(\e)
\quad\mbox{for any $u,u'\in B_X(\hat u,\e)$}.
\end{equation} 
\end{description}
Then the Markov process $(u_k,\IP_u)$ has a unique stationary measure $\mu\in\PP(X)$, 
and convergence~\eqref{1.6} holds.
\end{theorem}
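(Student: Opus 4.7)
The plan is to reduce the theorem to proving that the uniform contraction coefficient
$$
a(k):=\sup_{\lambda_1,\lambda_2\in\PP(X)}\|\PPPP_k^*\lambda_1-\PPPP_k^*\lambda_2\|_L^*
$$
goes to zero as $k\to\infty$, and then read off all three conclusions in one line. Existence of at least one stationary measure $\mu$ comes for free from the Feller property together with compactness of $X$ (hence of $\PP(X)$ for the weak topology) by the Bogolyubov--Krylov argument applied to Ces\`aro averages of $P_k(\hat u,\cdot)$. Uniqueness is then automatic because any two invariant measures $\mu_1,\mu_2$ obey $\|\mu_1-\mu_2\|_L^*=\|\PPPP_k^*\mu_1-\PPPP_k^*\mu_2\|_L^*\le a(k)\to0$. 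And~\eqref{1.6} holds with $\gamma_k:=a(k)$, since $\PPPP_k^*\mu=\mu$.

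The substantive step is the decay of $a(k)$, for which I would combine the two hypotheses by a Doeblin-type decomposition. Fix $\e>0$ and choose $r>0$ so small that $\delta(r)\le\e$, which is possible thanks to the Stability hypothesis. Apply the Recurrence hypothesis at this scale to obtain $m\ge 1$ and $p\in(0,1]$ with $P_m(u,B_X(\hat u,r))\ge p$ for every $u\in X$, and decompose the one-step kernel
$$
P_m(u,\cdot)=p\,\nu_u(\cdot)+(1-p)\,\tilde P(u,\cdot),
$$
where $\nu_u\in\PP(B_X(\hat u,r))$ is obtained by normalising the restriction of $P_m(u,\cdot)$ to the ball and $\tilde P(u,\cdot)\in\PP(X)$ is the non-negative, unit-mass complement. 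Integrating against any $\lambda_i\in\PP(X)$ yields $\PPPP_m^*\lambda_i=p\,\bar\nu_i+(1-p)\,\bar\mu_i$ with $\supp\bar\nu_i\subset B_X(\hat u,r)$. Applying $\PPPP_k^*$ and integrating the Stability bound against any coupling of $\bar\nu_1,\bar\nu_2$ supported in the ball gives $\|\PPPP_k^*\bar\nu_1-\PPPP_k^*\bar\nu_2\|_L^*\le\delta(r)\le\e$, while the complementary piece is dominated trivially by $a(k)$. Taking the supremum over $(\lambda_1,\lambda_2)$ produces the recursion $a(m+k)\le p\e+(1-p)a(k)$, valid for all $k\ge 0$.

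To conclude, I would iterate this recursion: writing any $j\ge 0$ as $j=qm+k$ with $0\le k<m$ and applying it $q$ times gives $a(j)\le\e\bigl(1-(1-p)^q\bigr)+(1-p)^q a(k)\le\e+2(1-p)^q$, using the dual-Lipschitz diameter bound $a(k)\le 2$. Letting $j\to\infty$ forces $q\to\infty$, so $\limsup_j a(j)\le\e$, and arbitrariness of $\e$ gives $a(j)\to 0$, as required. I do not expect a serious obstacle; the arrangement is exactly the scheme behind Theorem~3.1.3 of~\cite{KS-book}, consistent with the authors' remark that Theorem~\ref{t5.1} is a direct consequence of it. The only technical cares are the measurability of $u\mapsto\nu_u$, which follows because the Feller property makes $u\mapsto P_m(u,B)$ Borel-measurable for every Borel $B$ so that the integrated measures $\bar\nu_i,\bar\mu_i$ are well-defined, and the uniform-in-$k$ transfer of the Stability bound from points to measures supported in the ball, which is a one-line coupling argument.
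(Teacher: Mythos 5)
Your argument is correct, and it reaches the conclusion by a genuinely different route than the paper. The paper's proof (following Theorem~3.1.3 of~\cite{KS-book}) runs two \emph{independent} copies of the process, uses the recurrence bound together with the Borel--Cantelli lemma to show that the first hitting time of $B_X(\hat u,r)\times B_X(\hat u,r)$ is almost surely finite with a finite exponential moment, and only then invokes stability to compare the two trajectories after that random time. You instead exploit the fact that~\eqref{5.1} is a uniform (Doeblin-type) minorisation of $P_m(u,\cdot)$ by mass $p$ concentrated on a single small ball: splitting $P_m(u,\cdot)=p\,\nu_u+(1-p)\tilde P(u,\cdot)$, feeding the stability bound to the minorised part through a coupling of $\bar\nu_1,\bar\nu_2$, and bounding the remainder by the diameter $a(k)\le2$ yields the recursion $a(m+k)\le p\e+(1-p)a(k)$, whose iteration gives $a(k)\to0$; uniqueness and~\eqref{1.6} with $\gamma_k=a(k)$ then follow in one line from $\PPPP_k^*\mu=\mu$. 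All the steps check out: the normalised restriction $\nu_u=P_m(u,\cdot\cap B)/P_m(u,B)$ satisfies $p\,\nu_u\le P_m(u,\cdot)$ precisely because $p\le P_m(u,B)$, measurability of $u\mapsto\nu_u(\Gamma)$ is automatic for a transition function, and the product coupling of $\bar\nu_1,\bar\nu_2$ suffices since both live on $B_X(\hat u,r)$ where $\sup_k\|P_k(u,\cdot)-P_k(u',\cdot)\|_L^*\le\delta(r)\le\e$. What each approach buys: yours is more elementary and self-contained (no stopping times, no exponential moments, and it produces the sequence $\gamma_k$ explicitly as the contraction coefficient), whereas the hitting-time machinery of the paper's route is the one that upgrades to an exponential rate when the stability estimate is quantitative --- an upgrade that is unavailable here anyway, since $\delta(\e)$ carries no rate in $k$. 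Since both hypotheses enter only through $\delta(r)\le\e$ and the pair $(m,p)=(m(\e),p(\e))$, the $\e$-dependence of $m$ and $p$ is harmless for the qualitative conclusion $\limsup_j a(j)\le\e$ for every $\e>0$.
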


To establish this theorem, it suffices to take two independent copies of the Markov process~$(u_k,\IP_u)$ and use standard techniques (based on the Borel--Cantelli lemma) to show that the first hitting time of any ball around $(\hat u,\hat u)$ is almost surely finite and has a finite exponential moment; combining this with the stability property, we obtain the required result. Since the corresponding argument is well known (e.g., see Section~3.3 in~\cite{KS-book}), we do note give more details.

\subsection{Saturating subspaces}
\label{S:5.2}
As in Section~\ref{s4}, we consider only the case~$d=3$; the other dimensions can be treated by similar arguments. For any non-zero vector $l=(l_1,l_2,l_3)\in \Z^3$, we set
 $$
c_l(x) =  \cos\lag l,x\rag, \quad 
s_l(x) =  \sin\lag l,x\rag, \quad x\in\T^3,
$$ 
where $\lag l,x\rag =  l_1x_1+l_2x_2+l_3x_3$. Let us define an orthogonal basis $\{e_l\}$ in~$L^2(\T^d)$ by the relation
$$
e_l(x)= 
\begin{cases} c_l(x)  & \text{if }l_1 > 0\text{ or } l_1=0,\, l_2 > 0 \text{ or } l_1=l_2=0,\, l_3 \ge 0, \\ s_l(x) & \text{if }l_1 < 0\text{ or } l_1=0,\, l_2 < 0\text{ or } l_1=l_2=0,\, l_3 < 0. 
\end{cases}
$$
Let $\II\subset\Z^3$ be a finite symmetric set (i.e., $-\II=\II$) containing the origin. We define 
\begin{equation}\label{E:5.3}
\HH(\II):=\lspan\{e_l:l\in\II\}
\end{equation}
and denote by~$\HH_k(\II)$ the sets~$\HH_k$ given by~\eqref{Hk} with $\HH=\HH(\II)$.  Recall that~$\II$ is called a {\it generator\/} if all the vectors in~$\Z^3$ are  finite linear combinations of elements of~$\II$ with integer coefficients. 
    
\begin{proposition}\label{P:5.2}
The subspace $\HH(\II)$ is saturating  if and only if~$\II$ is a generator. In particular, the set $\II=\{(0,0,0), (\pm1,0,0),(0,\pm1,0),(0,0,\pm1)\}$ gives rise to the $7$-dimensional saturating subspace~$\HH(\II)$. 
\end{proposition}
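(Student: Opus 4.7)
The plan is to analyze the recursively defined subspaces $\HH_k$ by tracking which Fourier modes they contain. First I would record the elementary product-to-sum identities: for any $l,m\in\Z^3$, each of $c_lc_m$, $s_ls_m$, $c_ls_m$ lies in the two-dimensional span $\lspan\{c_{l+m},c_{l-m},s_{l+m},s_{l-m}\}$. Translating this into the basis $\{e_l\}_{l\in\Z^3}$ fixed in Section~\ref{S:5.2}, we get that for all $l,m\in\Z^3$,
$$
e_l\,e_m\in\lspan\{e_{l+m},\,e_{l-m}\}.
$$

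Using this, I would prove by induction on $k\ge 0$ that
$$
\HH_k=\lspan\{e_l:l\in\II_k\},
$$
where $\II_k\subset\Z^3$ denotes the set of all vectors that can be written as a sum of at most $k+1$ elements of~$\II$. The base case $k=0$ is the definition of~$\HH(\II)$; for the inductive step one expands a general generator $\zeta\xi$ of $\HH_{k+1}$ with $\zeta\in\HH_k$ and $\xi\in\HH$ using the product identity, and uses the symmetry $\II=-\II$ to absorb the signs $l\pm m$ into sums over $\II$. Taking the union,
$$
\bigcup_{k\ge0}\HH_k=\lspan\{e_l:l\in G\},\qquad G:=\bigcup_{k\ge0}\II_k,
$$
and $G$ is precisely the subgroup of $\Z^3$ generated by $\II$.

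From here both implications of the equivalence fall out cleanly. If $\II$ is a generator, then $G=\Z^3$, so $\bigcup_k\HH_k$ contains every trigonometric monomial $e_l$ and is therefore dense in $L^2(\T^3)$ by completeness of the Fourier basis; hence $\HH(\II)$ is saturating. Conversely, if $\II$ is not a generator, then $G\subsetneq\Z^3$, and for any $l_0\in\Z^3\setminus G$ the function $e_{l_0}$ is $L^2$-orthogonal to every element of $\bigcup_k\HH_k$; the union is then not dense, so $\HH(\II)$ is not saturating. For the concrete example, the vectors $(1,0,0),(0,1,0),(0,0,1)$ belong to $\II$ and visibly generate $\Z^3$ as an abelian group, so $\HH(\II)$ is saturating; counting the seven linearly independent basis functions $1,\cos\langle e_j,x\rangle,\sin\langle e_j,x\rangle$ ($j=1,2,3$) gives the claimed dimension.

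No step looks genuinely hard; the only place to be careful is the inductive identification of $\HH_k$ with $\lspan\{e_l:l\in\II_k\}$, where one has to verify that the four possible products $c_lc_m,c_ls_m,s_lc_m,s_ls_m$ and the sign conventions built into the definition of $e_l$ combine with the symmetry $\II=-\II$ to produce exactly the set of modes indexed by $\II_{k+1}$, with no spurious restrictions.
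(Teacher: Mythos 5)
Your argument is correct and follows essentially the same route as the paper: product-to-sum identities, an induction identifying $\bigcup_k\HH_k$ with the span of the modes indexed by the subgroup generated by $\II$, density of the Fourier basis for sufficiency, and orthogonality of a missing mode for necessity. One small correction: the literal inclusion $e_l e_m\in\lspan\{e_{l+m},e_{l-m}\}$ is false (a product such as $c_l s_m$ produces sines, which may correspond to $e_{-(l\pm m)}$ rather than $e_{l\pm m}$); the correct statement is $e_l e_m\in\lspan\{e_{\pm(l+m)},e_{\pm(l-m)}\}$, which is harmless here because the index sets $\II_k$ are symmetric --- exactly the point you flag at the end, and the one place where (as in the paper's proof) one must take sums and differences of the four products to recover both $c_{l\pm m}$ and $s_{l\pm m}$ individually.
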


\begin{proof} 
To prove the sufficiency of the condition, we note that
\begin{equation}\label{E:5.4}
c_l(x)c_r(x)=\frac12\bigl(c_{l-r}(x)+c_{l+r}(x)\bigr), \quad 
s_l(x)s_r(x)=\frac12\bigl(c_{l-r}(x)-c_{l+r}(x)\bigr).
 \end{equation}
If $c_r, s_r\in\HH(\II)$ and~$c_l, s_l\in  \HH_k(\II)$, then~\eqref{E:5.4} implies that  $c_{l+r}, c_{l-r}\in \HH_{k+1}(\II)$.  A similar argument shows that~$s_{l+r},s_{l-r}\in  \HH_{k+1}(\II)$. Since~$\II$ is a generator, we see that all the vectors of the basis $\{e_l\}$ can be obtained from the elements of~$\HH(\II)$ after finitely many iterations. 
   
\smallskip
To prove the necessity, assume that~$\II$ is not a generator. Then there is a vector $m\in \Z^3$   that is not a finite linear combination of elements of $\II$ with integer coefficients.   It is easy to see that the functions $c_m$ and $s_m$ are orthogonal to $\cup_{k\ge0} \HH_k(\II)$. This shows that~$\HH(\II)$ is not saturating and completes the proof of the proposition.      
\end{proof}

\subsection{Genericity of Hypothesis~(S)}
\label{S:5.3}

\begin{proposition}\label{P:5.3}
Let $\nu>0$ be any number and let~$f$ be a real polynomial satisfying conditions~\eqref{E:4.1}--\eqref{power-dimension} with~$d=3$. Then there is a subset $\GG_\nu\subset H^1(\T^3)$ of Baire's second category such that, for any $h\in \GG_\nu$, the nonlinear  equation
\begin{equation} \label{stationary-eq3}
	-\nu\Delta w+f(w)=h(x), \quad x\in\T^3
\end{equation}
has finitely many solutions.
\end{proposition}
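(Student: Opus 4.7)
The plan is to apply the Sard--Smale transversality theorem to the nonlinear mapping
\[
\Phi:H^3(\T^3)\times H^1(\T^3)\to H^1(\T^3),\qquad \Phi(w,h)=-\nu\Delta w+f(w)-h,
\]
and to take $\GG_\nu$ to be the set of regular values of the projection to the second factor from $\Phi^{-1}(0)$. Note first that any $H^2$-solution of~\eqref{stationary-eq3} with $h\in H^1$ is automatically of class $H^3$: since $H^2\hookrightarrow L^\infty$ in three dimensions, one has $f(w)\in L^\infty$ and $\nabla f(w)=f'(w)\nabla w\in L^2$, so $f(w)\in H^1$ and elliptic regularity applied to $-\nu\Delta w=h-f(w)\in H^1$ yields $w\in H^3$. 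Thus working in $H^3$ captures every solution appearing in Hypothesis~\hyperlink{S}{(S)}.

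Next I would check the hypotheses of Sard--Smale. Smoothness of $\Phi$ is immediate from $H^3\hookrightarrow C^1$ and the polynomial form of~$f$. Testing~\eqref{stationary-eq3} against $w$ and using~\eqref{E:4.2}, followed by an elliptic bootstrap that exploits the subcritical exponent $p\le(d+2)/(d-2)$, produces an a priori bound $\|w\|_{H^3}\le\kappa(\|h\|_{H^1})$ for every zero of $\Phi(\cdot,h)$; consequently the solution set $W(h):=\{w:\Phi(w,h)=0\}$ is compact in~$H^2$, and the projection $\pi:\Phi^{-1}(0)\to H^1$, $(w,h)\mapsto h$, is proper. The full derivative $D\Phi(w,h)[v,g]=-\nu\Delta v+f'(w)v-g$ is plainly surjective (take $v=0$), so $0$ is a regular value of~$\Phi$ and $\Phi^{-1}(0)$ is a smooth Banach submanifold. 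The partial derivative $D_w\Phi=-\nu\Delta+f'(w):H^3\to H^1$ is Fredholm of index~$0$: $-\nu\Delta$ has kernel and cokernel both spanned by constants, and multiplication by $f'(w)$ is a compact perturbation, because $f'(w)\in H^2$, $H^2(\T^3)$ is a Banach algebra, and the inclusion $H^2\hookrightarrow H^1$ is compact by Rellich. It follows that $\pi$ is itself a smooth Fredholm map of index~$0$.

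Sard--Smale then delivers a residual subset $\GG_\nu\subset H^1(\T^3)$ (hence of Baire's second category) of regular values of~$\pi$. For any $h\in\GG_\nu$ and any $w\in W(h)$, the linearisation $-\nu\Delta+f'(w):H^3\to H^1$ is an isomorphism, so the implicit function theorem makes $w$ isolated in $W(h)$; combined with the compactness of $W(h)$, this forces $W(h)$ to be finite, which is exactly Hypothesis~\hyperlink{S}{(S)}. The principal obstacle I expect is the bookkeeping in choosing function spaces that simultaneously make $\Phi$ smooth, $D_w\Phi$ Fredholm, and the a priori bound available; the subcriticality $p\le(d+2)/(d-2)$ plays a silent but essential role, since it is what upgrades the energy-level $L^{p+1}$-control on $w$ to the $H^3$-control needed for Fredholmness and properness.
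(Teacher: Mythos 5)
Your proof is correct and follows essentially the same route as the paper: Sard--Smale for the index-zero Fredholm map (with compactness of multiplication by $f'(w)$ supplying the index computation), the inverse function theorem to make solutions isolated, and compactness of the solution set to conclude finiteness. The only cosmetic difference is that you use the parametric-transversality formulation (regular values of the projection from $\Phi^{-1}(0)$), whereas the paper applies Smale's theorem directly to $w\mapsto -\nu\Delta w+f(w):H^3(\T^3)\to H^1(\T^3)$, for which the good forces $h$ are literally the regular values; the two yield the same set $\GG_\nu$.
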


Before proceeding with the proof, let us recall the formulation of an infinite-dimensional version of Sard's theorem and some related definitions (see~\cite{smale-1965}). Let~$X$ and~$Y$ be Banach spaces. A linear operator $L:X\to Y$ is said to be {\it Fredholm\/} if its image   is closed, and the dimension of its kernel and the co-dimension of its image are finite. The {\it index\/} of~$L$ is defined by
$$
\Ind L:= \dim (\Ker L)- \codim(\Im L). 
$$
It is well known that if $L:X\to Y$ is a Fredholm operator and $K:X\to Y$ is a compact linear operator, then $L+K$ is also Fredholm, and $\Ind L=\Ind(L+K)$. A $C^1$-smooth map $F:X\to Y$ is said to be {\it Fredholm\/} if for any $w\in X$ the derivative $DF(w):X\to Y$ is a Fredholm operator. The {\it index\/} of~$F$ is the index of the operator~$DF(w)$ at some $w\in X$ (it is independent of the choice of $w$). A point~$y\in Y$ is called a {\it regular value\/} for~$F$ if $F^{-1}(y)=\varnothing$ or $DF(w):X\to Y$ is surjective for any~$w\in F^{-1}(y)$. The following result is due to Smale~\cite[Corollary~1.5]{smale-1965}. 

\begin{theorem}\label{T:5.4}
	Let $F:X\to Y$ be a $C^k$-smooth Fredholm  map such that $k>\max \{\Ind F,0\}$. Then its set of regular values is of Baire's second category. 
\end{theorem}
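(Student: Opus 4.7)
The plan is to show that the set of critical values $F(C)$, where $C = \{w \in X : DF(w) \text{ is not surjective}\}$, is meager in $Y$; the regular values will then form the complement, which is residual, hence of second category by the Baire theorem applied to the Banach space $Y$. Since $X$ is separable (as is standard in the Sard--Smale setting), it suffices to cover $C$ by countably many open neighborhoods in which a local finite-dimensional reduction is available, and then to observe that a countable union of meager sets is meager.

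For the local step, fix $w_0 \in C$ and set $K_0 := \Ker DF(w_0)$, $R_0 := \Im DF(w_0)$, with $\dim K_0 = n_0$ and $\codim R_0 = m_0$, so $\Ind F = n_0 - m_0$. Choose closed splittings $X = K_0 \oplus X_1$ and $Y = Y_0' \oplus R_0$ with $Y_0'$ finite-dimensional; the restriction $DF(w_0)|_{X_1} : X_1 \to R_0$ is then a topological isomorphism. Writing $w = w_0 + v + u$ with $v \in K_0$, $u \in X_1$, I would apply the implicit function theorem to $(v,u) \mapsto \pi_{R_0}(F(w)-F(w_0))$ to obtain, on a neighborhood of $(0,0) \in K_0 \times R_0$, a $C^k$ solution map $u = \psi(v,r)$. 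Then set
\[
G_r(v) := \pi_{Y_0'}\bigl(F(w_0 + v + \psi(v,r)) - F(w_0)\bigr),
\]
a $C^k$ map from a neighborhood of $0 \in K_0$ to $Y_0'$. A direct chain-rule computation shows that $w = w_0 + v + \psi(v,r)$ is critical for $F$ if and only if $v$ is critical for $G_r$, with corresponding critical value $F(w_0) + G_r(v) + r$.

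Because $\dim K_0 = n_0$ and $\dim Y_0' = m_0$, the hypothesis $k > \max\{\Ind F, 0\} = \max\{n_0 - m_0, 0\}$ is precisely what the classical finite-dimensional Sard theorem requires for $G_r$, giving that the set of critical values of $G_r$ has Lebesgue measure zero in $Y_0'$ for each $r$. I would then cover the domain of the parametrization by countably many products $C_\alpha \times D_\alpha$, with $C_\alpha \subset K_0$ compact and $D_\alpha \subset R_0$ open and bounded (possible by separability of the two factors), and consider
\[
T_\alpha := \{(G_r(v), r) : v \in C_\alpha \text{ critical for } G_r,\; r \in D_\alpha\} \subset Y_0' \oplus R_0.
\]
Since ``$DG_r(v)$ not surjective'' is a closed condition, the critical set of $G_r$ inside $C_\alpha$ is compact, so the slice of $T_\alpha$ (and of its closure) at each $r$ is the continuous image of a compact set, hence compact; by Sard it has measure zero in the finite-dimensional $Y_0'$, and therefore empty interior. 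This forces $\overline{T_\alpha}$ to have empty interior in $Y$, so $T_\alpha$ is nowhere dense.

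Putting it together, $F(C \cap \mathcal{U}(w_0))$ lies in the countable union $\bigcup_\alpha (F(w_0) + T_\alpha)$ of nowhere dense sets, hence is meager. Covering $C$ by countably many such neighborhoods via separability of $X$ yields that $F(C)$ is meager in $Y$, so the regular values are residual and therefore of second category. The step I expect to be the main obstacle is the passage from the slice-wise measure-zero property to nowhere-denseness in the ambient infinite-dimensional $Y$: it is essential that $v$ range over a compact $C_\alpha$, as otherwise the $r$-slices of the \emph{closure} of $T_\alpha$ could acquire interior in $Y_0'$ and the Baire-category conclusion would collapse.
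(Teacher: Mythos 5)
The paper does not actually prove this statement: Theorem~\ref{T:5.4} is quoted from Smale \cite[Corollary~1.5]{smale-1965} without proof, so there is no in-paper argument to compare against. Your proposal is, in substance, a correct reconstruction of Smale's original proof: the local normal form $(v,r)\mapsto F(w_0)+r+G_r(v)$ obtained from the implicit function theorem, the observation that $w=w_0+v+\psi(v,r)$ is critical for $F$ exactly when $v$ is critical for the finite-dimensional map $G_r:K_0\to Y_0'$, the classical Sard theorem in the borderline regularity $k>\max\{n_0-m_0,0\}$, and the compactness-in-$v$ device that upgrades ``Lebesgue-null in each $r$-slice'' to ``nowhere dense in $Y$''; a Lindel\"of covering then makes $F(C)$ meager. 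Two points deserve attention. First, the theorem as stated in the paper omits the separability (second-countability) hypothesis on $X$ that Smale imposes and that your covering step genuinely uses; you are right to flag it, and it is harmless here since the application is to $X=H^3(\T^3)$, $Y=H^1(\T^3)$. Second, the one sentence that should be expanded is the claim that the $r$-slice of $\overline{T_\alpha}$ is the continuous image of a compact set: what is true, and what suffices, is that this slice is \emph{contained} in the compact null set $\{G_r(v): v\in C_\alpha,\ DG_r(v)\ \text{not surjective}\}$ (for $r$ in the compact closure of $D_\alpha$ inside the parameter domain). One sees this by taking any $(y,r)=\lim (G_{r_n}(v_n),r_n)$ with $v_n\in C_\alpha$ critical for $G_{r_n}$, extracting a convergent subsequence $v_n\to v\in C_\alpha$, and using the continuity of $(v,r)\mapsto DG_r(v)$ together with the closedness of the non-surjectivity condition. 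With that spelled out, your argument is complete and agrees with Smale's.
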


\begin{proof}[Proof of Proposition~\ref{P:5.3}] 
Let us consider the map 
$$
F:H^3(\T^3)\to H^1(\T^3), \quad  w\mapsto -\nu\Delta w+f(w).
$$
We have $\Ind(-\nu \Delta)=0$, so $\Ind(-\nu \Delta+Df(w))=0$ for any~$w\in H^3(\T^3)$, since the derivative $Df(w):H^3(\T^3)\to H^1(\T^3)$ (acting as the operator of multiplication by~$f'(w)$) is  compact. Smale's theorem implies the existence of a set $\GG_\nu\subset H^1(\T^3)$ of Baire's second category such that $DF(w):H^3(\T^3)\to H^1(\T^3)$ is surjective for any solution~$w$ of Eq.~\eqref{stationary-eq3} with~$h\in \GG_\nu$. Since the index is zero, it follows that the derivative $DF(w)$ is an isomorphism between the spaces~$H^3(\T^3)$ and~$H^1(\T^3)$ for any solution $w\in H^3(\T^3)$ of~\eqref{stationary-eq3}. Applying the inverse function theorem, we conclude that the solutions are isolated points in~$H^3(\T^3)$. On the other hand, the elliptic regularity implies that the family of all solutions for Eq.~\eqref{stationary-eq3} is a compact set in~$H^3(\T^3)$, so there can be only finitely many of them.
\end{proof}

\subsection{Approximate controllability of parabolic PDEs}\label{S:5.4}
In this section, we discuss briefly the  approximate controllability for Eq.~\eqref{rf-pde} established in~\cite{nersesyan-2018}.  This type of results were obtained by Agrachev and Sarychev~\cite{AS-2005, AS-2006} for the 2D  Navier--Stokes and Euler equations on the torus and later extended to the 3D case in~\cite{shirikyan-cmp2006,nersisyan-2010}. We assume that the  nonlinearity~$f:\R\to \R$ is a polynomial satisfying the hypotheses of Section~\ref{s4}.   The space  $\HH(\II)$ is   defined  by~\eqref{E:5.3} for some finite symmetric set~$\II\subset\Z^d$  containing the~origin, with an obvious modification of the functions~$e_l$ for $d=1,2, 4$.   

   \begin{theorem}\label{T:5.5} 
   In addition to the above   hypotheses, assume that~$\II$ is a generator for $\Z^d$ and~$h\in H^1(\T^d)$ is a given function. Then Eq.~\eqref{rf-pde} is approximately controllable in $L^2(\T^d)$, i.e.,     for any~$\nu>0$,    $\e>0$, and $u_0, u_1\in L^2(\T^d)$, there is a function   $\zeta\in L^2([0,1], \HH(\II))$   such that the solution of Eq.~\eqref{rf-pde} with initial condition~$u(0)=u_0$ satisfies the inequality 
$$
\|u(1) - u_1 \|_{L^2(\T^d)}<\e.
$$
\end{theorem}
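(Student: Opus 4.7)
The plan is to implement an Agrachev--Sarychev saturation argument for the parabolic equation~\eqref{rf-pde}, following the strategy of~\cite{AS-2005, nersesyan-2018}. The proof splits into three layers.

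\emph{Unrestricted control.} Fix $u_0, u_1 \in L^2(\T^d)$ and $\e>0$. Approximating by smooth functions and using continuity of the resolving operator of~\eqref{rf-pde} in $L^2$, I may assume $u_0, u_1$ are smooth. Pick any smooth path $t\mapsto \tilde u(t)$ joining $u_0$ to $u_1$ over $[0,1]$, and set $\zeta_\infty := \p_t\tilde u - \nu\Delta\tilde u + f(\tilde u) - h$. Then $\tilde u$ solves~\eqref{rf-pde} with this $L^2(J, L^2(\T^d))$-valued control and reaches $u_1$ exactly at time~$1$. Since $\cup_k \HH_k$ is dense in $L^2(\T^d)$ by Proposition~\ref{P:5.2} (using that $\II$ is a generator), projecting $\zeta_\infty(t,\cdot)$ onto $\HH_k$ for $k$ large enough yields an $L^2(J,\HH_k)$-valued control that drives the system to within $\e$ of $u_1$, by continuity of the flow in the control.

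\emph{Saturation induction.} The core step to prove is: for every $k\ge 0$ and $\e>0$, if~\eqref{rf-pde} is $\e$-controllable by some $L^2(J,\HH_{k+1})$-valued control, then it is $2\e$-controllable by an $L^2(J,\HH_k)$-valued control. Iterating this reduction finitely many times, starting from the level produced by the first layer, brings the controls down to $\HH_0 = \HH(\II)$ and concludes the proof.

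\emph{One-step reduction (the main obstacle).} To pass from $\HH_{k+1}$ to $\HH_k$, I approximate the $\HH_{k+1}$-valued control by a piecewise-constant one whose values decompose, via~\eqref{Hk}, into elements of $\HH_k$ plus finite sums of products $\xi\zeta$ with $\xi\in\HH$, $\zeta\in\HH_k$. Each such product is realized as the averaged effect of a highly oscillating $\HH_k$-valued control of the form
$$
\zeta_n(t,x) = n^{1/2}\,\varphi(nt)\,\zeta(x) + n\,\psi(nt)\,\xi(x),
$$
with $\varphi,\psi$ zero-mean $1$-periodic scalar profiles chosen so that $\varphi\psi$ has a prescribed nonzero average. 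The self-interaction of the oscillations through the nonlinearity $f(u)$ produces, on the slow scale and for large $n$, the required additional forcing $\xi\zeta$. The principal technical difficulty is the accompanying convergence lemma: the $L^2$-distance at time $1$ between the solution driven by $\zeta_n$ and the one driven by the equivalent $\HH_{k+1}$-valued control must tend to zero as $n\to\infty$. Because $f$ has polynomial degree up to the critical exponent in~\eqref{power-dimension} and the control amplitudes grow like~$n$, closing the Duhamel estimate relies on the parabolic smoothing of $-\nu\Delta$, Sobolev embedding in the critical range, and careful cancellation in the fast variable. This is the step carried out in detail in~\cite{nersesyan-2018}, whose argument I would follow.
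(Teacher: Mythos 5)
Your proposal is correct and follows essentially the same route as the paper: the paper's proof of Theorem~\ref{T:5.5} consists precisely of invoking Theorem~2.5 of~\cite{nersesyan-2018} (stated there for $H^2$ data) together with the same density/continuity approximation you describe in your first layer in order to handle $u_0,u_1\in L^2(\T^d)$. Your saturation outline is a reasonable sketch of what that reference carries out, and since you defer the one genuinely hard step (the convergence lemma for the oscillating controls) to the same reference the paper cites, the two arguments coincide in substance.
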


This result is essentially Theorem~2.5 of~\cite{nersesyan-2018}, dealing with the case when the problem in question is not necessarily well posed and assuming that $u_0,u_1\in H^2(\T^3)$. Under our hypotheses, Eq.~\eqref{rf-pde} is well posed, and using a simple approximation argument, we can prove the validity of Theorem~\ref{T:5.5}. 

\addcontentsline{toc}{section}{Bibliography}
\def\cprime{$'$} \def\cprime{$'$}
  \def\polhk#1{\setbox0=\hbox{#1}{\ooalign{\hidewidth
  \lower1.5ex\hbox{`}\hidewidth\crcr\unhbox0}}}
  \def\polhk#1{\setbox0=\hbox{#1}{\ooalign{\hidewidth
  \lower1.5ex\hbox{`}\hidewidth\crcr\unhbox0}}}
  \def\polhk#1{\setbox0=\hbox{#1}{\ooalign{\hidewidth
  \lower1.5ex\hbox{`}\hidewidth\crcr\unhbox0}}} \def\cprime{$'$}
  \def\polhk#1{\setbox0=\hbox{#1}{\ooalign{\hidewidth
  \lower1.5ex\hbox{`}\hidewidth\crcr\unhbox0}}} \def\cprime{$'$}
  \def\cprime{$'$} \def\cprime{$'$} \def\cprime{$'$}
\providecommand{\bysame}{\leavevmode\hbox to3em{\hrulefill}\thinspace}
\providecommand{\MR}{\relax\ifhmode\unskip\space\fi MR }
\providecommand{\MRhref}[2]{%
  \href{http://www.ams.org/mathscinet-getitem?mr=#1}{#2}
}
\providecommand{\href}[2]{#2}

\end{document}